\newcolumntype{R}[2]{%
    >{\adjustbox{angle=#1,lap=\width-(#2)}\bgroup}%
    l%
    <{\egroup}%
}
\newcommand{\ra}[1]{\renewcommand{\arraystretch}{#1}}
\newcommand*\tilt{\multicolumn{1}{R{30}{1em}}}
\newcommand{\cmark}{\ding{51}}%
\newcommand{\xmark}{\ding{55}}%
\definecolor{darkblue}{rgb}{0,0,0.6}
\newcommand{\mylabel}[2]{#2\def\@currentlabel{#2}\label{#1}}
\newtheorem*{rep@theorem}{\rep@title}
\newcommand{\newreptheorem}[2]{%
\newenvironment{rep#1}[1]{%
 \def\rep@title{#2 \ref{##1}}%
 \begin{rep@theorem}}%
 {\end{rep@theorem}}}
\newtheorem{proposition}{Proposition}[section]
\newtheorem{theorem}[proposition]{Theorem}
\newtheorem{corollary}[proposition]{Corollary}
\newtheorem{lemma}[proposition]{Lemma}
\theoremstyle{definition}
\newtheorem{question}[proposition]{Question}
\theoremstyle{remark}
\newtheorem{remark}[proposition]{Remark}
\newtheorem{claim}[proposition]{Claim}
\newtheorem*{remark*}{Remark}
\numberwithin{equation}{section}
\newcommand{\Diff}{\mathrm{Diff}}
\newcommand{\Top}{\mathrm{Top}}
\newcommand{\CC}{\mathbb{C}}
\newcommand{\R}{\mathbb{R}}
\newcommand{\Z}{\mathbb{Z}}
\newcommand{\cN}{\mathcal{N}}
\newcommand{\LL}{\mathbb{L}}
\newcommand{\im}{\operatorname{Im}}
\newcommand{\Id}{\operatorname{Id}}
\newcommand{\into}{\hookrightarrow}
\newcommand{\ol}{\overline}
\newcommand{\wt}{\widetilde}
\newcommand{\sm}{\setminus}
\newcommand{\ks}{\operatorname{ks}}
\newcommand{\CP}{\mathbb{CP}}
\newcommand{\RP}{\mathbb{RP}}
\renewcommand{\star}{\mathop{}\!*}
\DeclareMathOperator{\capp}{cap}
\newcommand{\tmfrac}[2]{\mbox{\large$\frac{#1}{#2}$}} 
\LetLtxMacro\Oldfootnote\footnote
\begin{document}
\title{Gluck twists on concordant or homotopic spheres}

\author{Daniel Kasprowski}
\address{School of Mathematical Sciences, University of Southampton, United Kingdom}
\email{d.kasprowski@soton.ac.uk}

\author{Mark Powell}
\address{School of  Mathematics and Statistics, University of Glasgow, United Kingdom}
\email{mark.powell@glasgow.ac.uk}

\author{Arunima Ray}
\address{Max Planck Institut f\"{u}r Mathematik, Vivatsgasse 7, 53111 Bonn, Germany}
\email{aruray@mpim-bonn.mpg.de }

\def\subjclassname{\textup{2020} Mathematics Subject Classification}
\expandafter\let\csname subjclassname@1991\endcsname=\subjclassname
\subjclass{
57K40, 
57N70, 
57R80. 
}
\keywords{Gluck twist, 4-manifolds}

\begin{abstract}
Let $M$ be a compact 4-manifold and let $S$ and $T$ be embedded $2$-spheres in $M$, both with trivial normal bundle.  We write $M_{S}$ and $M_T$ for the 4-manifolds obtained by the Gluck twist operation on $M$ along $S$ and $T$ respectively.  We show that if $S$ and  $T$ are concordant, then $M_S$ and $M_T$ are $s$-cobordant, and so if $\pi_1(M)$ is good, then $M_S$ and $M_T$ are homeomorphic. Similarly, if $S$ and $T$ are homotopic then we show that $M_S$ and $M_T$ are simple homotopy equivalent.
Under some further assumptions, we deduce that~$M_S$ and $M_T$ are homeomorphic. We show that additional assumptions are necessary by giving an example where $S$ and $T$ are homotopic but $M_S$ and $M_T$ are not homeomorphic.  We also give an example where $S$ and $T$ are homotopic and $M_S$ and $M_T$ are homeomorphic but not diffeomorphic.
\end{abstract}
\maketitle

\section{Introduction}

Given a compact, smooth $4$-manifold $M$ and a smooth embedding of a $2$-sphere in the interior of $M$ with image $S$, such that the submanifold $S$ has trivial normal bundle, we may obtain a 4-manifold $M_S$ as follows.  Fix an identification of a closed tubular neighbourhood of $S$ as $\ol{\nu} S \cong D^2 \times S^2$ and let $\partial_S(M\sm \nu S)\subseteq \partial (M\sm \nu S)$ denote the boundary component identified with $\partial \ol\nu{S}\cong S^1 \times S^2$. Let $R \colon S^1 \to SO(3)$ be a smooth map that represents a generator of $\pi_1(SO(3))$.  The \emph{Gluck twist} on $S^1 \times S^2$, named after Herman Gluck~\cite{gluck62}, is the diffeomorphism
\begin{align*}
G \colon S^1 \times S^2 &\to S^1 \times S^2\\
(x,y) &\mapsto (x,R(x) \cdot y),
\end{align*}
obtained by restricting the canonical action of $SO(3)$ to $S^2 \subseteq \R^3$.
By definition the \emph{Gluck twist operation} on $M$ along $S$ consists of replacing $M$ by the smooth 4-manifold
\[M_S := (M \sm \nu S) \cup_{G} (D^2 \times S^2).\]
Here we identify using  $\partial (D^2 \times S^2) = S^1 \times S^2 \xrightarrow{G} S^1 \times S^2 \cong \partial_S(M \sm \nu S)$.
The Gluck twist on a null-homologous 2-sphere in a simply connected 4-manifold does not change the homotopy type, but this is not the case in general.
Nevertheless, $\pi_1(M_S) \cong \pi_1(M)$, as we show in~\cref{lem:pi1}. There is also a canonical identification $\partial M_S=\partial M$.
The construction can be carried through in exactly the same way in the topological category, provided~$S$ is a topological submanifold of $M$, i.e.\ the image of a locally flat embedding with trivial normal bundle. Note that we allow nonorientable 4-manifolds, as well as those with nonempty boundary.

A classical question compares the diffeomorphism type of $M$ and $M_S$ for a $2$-knot $S\subseteq M$, most commonly in the case of $M=S^4$.
Previous work on Gluck twists in 4-manifolds other than $S^4$ includes \cite{akbulut-fake-gluck,akbulut-yasui}.

We consider the effect of the Gluck twisting on pairs $\{S,T\}$ of $2$-knots in some $M$, and compare the homeomorphism type and homotopy type of~$M_S$ and~$M_T$. One hope is that by generalising the question, we might inspire readers to find new exotic 4-manifolds using Gluck twists.
First we consider the case of concordant $2$-spheres $S$ and $T$ in a $4$-manifold $M$, i.e.~when there is a proper, locally flat embedding $c \colon S^2 \times [0,1] \to M \times [0,1]$ with $c(S^2 \times \{0\})= S \subseteq M \times \{0\}$ and
$c(S^2 \times \{1\}) = T \subseteq M \times \{1\}$.

\begin{theorem}\label{thm:conc-gluck-thm}
The following statement holds in both the smooth category and the topological category with locally flat embeddings.
 Let $M$ be a compact $4$-manifold, and let $S,T\subseteq M$ be embedded, concordant $2$-spheres, both with trivial normal bundle.
Then $M_S$ and $M_T$ are $s$-cobordant.

If~$\pi_1(M)$ is a good group, it follows that $M_S$ and $M_T$ are homeomorphic.
\end{theorem}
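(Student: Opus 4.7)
The plan is to build an $s$-cobordism $W$ from $M_S$ to $M_T$ rel $\partial M\times[0,1]$ by performing a parametrised Gluck twist on $M\times[0,1]$ along the concordance $C:=c(S^2\times[0,1])$, and then, under the good-group hypothesis, to invoke the topological $s$-cobordism theorem.

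First I would equip $C$ with a tubular neighbourhood of the form $D^2\times S^2\times[0,1]$. The normal bundle $\nu C$ is a rank-$2$ bundle over $C\cong S^2\times[0,1]$; its restriction to the end $S\subset C$ coincides with $\nu S$, which is trivial by hypothesis. Since $S^2\times[0,1]$ deformation retracts onto $S^2\times\{0\}$, $\nu C$ itself is trivial, and the trivialisation can be arranged to restrict to the chosen framings of $\nu S$ and $\nu T$ at the two ends (any two such extensions differ by a map $S^2\times[0,1]\to SO(2)$, which is null-homotopic). I then set
\[
W := \bigl((M\times[0,1])\sm \nu C\bigr)\;\cup_{G\times \id_{[0,1]}}\; (D^2\times S^2\times[0,1]),
\]
with the Gluck twist $G$ applied fibrewise over $[0,1]$. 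After smoothing corners, $W$ is a cobordism between $M_S$ and $M_T$ rel $\partial M\times[0,1]$.

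To show $W$ is an $s$-cobordism, I would verify the three standard conditions in turn. A Van Kampen argument parallel to \cref{lem:pi1} gives $\pi_1(M_S)\cong\pi_1(W)\cong\pi_1(M_T)$. For the homotopy equivalence $W\simeq M_S$, I would produce a strong deformation retraction by combining the obvious collapse of $D^2\times S^2\times[0,1]$ onto its $t=0$ slice with a retraction of $(M\times[0,1])\sm\nu C$ onto $(M\sm\nu S)\times\{0\}$ constructed using the tubular neighbourhood structure along $C$; the two pieces agree on the gluing region $S^1\times S^2\times[0,1]$ because $G\times\id_{[0,1]}$ is constant in the $[0,1]$-coordinate. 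For vanishing Whitehead torsion, I would use that each building block of $W$ is a product glued by a fibrewise-constant map to produce a relative handle decomposition of $W$ over $M_S$ whose handles cancel in pairs, yielding a simple contraction of the relative cellular chain complex over $\Z[\pi_1(W)]$.

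Under the good-group hypothesis, the topological $s$-cobordism theorem of Freedman--Quinn then produces a homeomorphism $W\homeo M_S\times[0,1]$ rel boundary, and hence $M_S\homeo M_T$. The main difficulty I anticipate is the rigorous simplicity argument: while geometrically it is clear that $W$ is as close to a product as possible and its Whitehead torsion should vanish, extracting an explicit handle cancellation (especially in the topological category, where this relies on the Kirby--Siebenmann handle decomposition machinery in dimension $5$) requires care. The framing matching at the two ends is a secondary technical point, resolved by the connectivity observation noted above.
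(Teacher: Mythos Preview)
Your construction of $W$ matches the paper's, and the $\pi_1$ computation and the final appeal to the $s$-cobordism theorem are fine.

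The genuine gap is in your argument that $M_S\hookrightarrow W$ is a homotopy equivalence. You assert a retraction of $E_C:=(M\times[0,1])\sm\nu C$ onto $E_S:=(M\sm\nu S)\times\{0\}$, but no such retraction exists in general. If it did, then by symmetry $E_C$ would also retract onto $E_T$, so $E_S\simeq E_T$ for every pair of concordant $2$-spheres. Taking $M=S^4$, every $2$-knot is concordant to the unknot, yet a nontrivial $2$-knot has exterior not homotopy equivalent to $S^1\times D^3$; so the inclusion $E_S\hookrightarrow E_C$ is typically not even a homotopy equivalence. The ``tubular neighbourhood structure along $C$'' gives you nothing here, since that neighbourhood has been excised. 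Your torsion argument inherits the same problem: $(W,M_S)$ is not modelled on a product, so there is no reason to expect handles that cancel in pairs.

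The paper's remedy is to compare $W$ not with $M_S$ directly but with the honest product $M\times[0,1]$. Choose a handle decomposition of $E_C$ relative to $E_S$ (topologically this uses Quinn's $5$-dimensional handle theory). All handles lie in $E_C$ and are attached away from $\partial\ol\nu C$. Gluing $D^2\times S^2\times[0,1]$ to $E_C$ by the identity yields $M\times[0,1]$ rel $M$; gluing by $G\times\Id_{[0,1]}$ yields $W$ rel $M_S$. Since the handles never meet the gluing region, the two relative $\Z\pi$-chain complexes are literally identical. The first is acyclic with trivial Whitehead torsion because $M\times[0,1]$ is a product; hence so is the second. This single observation gives both the homotopy equivalence and the simplicity, with no need for $E_C$ to retract onto $E_S$.
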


We will prove \cref{thm:conc-gluck-thm} in \cref{sec:conc-gluck-thm}.
For the deduction that $M_S$ and $M_T$ are homeomorphic in~\cref{thm:conc-gluck-thm}, we use the $s$-cobordism theorem of Freedman and Quinn~\cite{FQ}*{Theorem~7.1A}, which states that every $5$-dimensional compact $s$-cobordism with good fundamental group is homeomorphic to a product. A group is said to be \emph{good}, in the sense of Freedman, if it satisfies the $\pi_1$-null disc property~\cite{Freedman-Teichner:1995-1}. At present, the class of good groups is known to contain groups of subexponential growth~\cite{Freedman-Teichner:1995-2,Krushkal-Quinn:2000-1} and to be closed under subgroups, quotients, extensions, and colimits.  As concrete examples, the class includes finite groups and elementary amenable groups, and therefore in particular all solvable groups. It is not known to include nonabelian free groups. See~\cite{DET-book-goodgroups} for a comprehensive survey on good groups.

For our second result, we consider the case of homotopic $2$-knots in a given manifold.
Note that concordant $2$-knots are homotopic.

\begin{theorem}\label{thm:gluck-she}
Let $M$ be a compact $4$-manifold, and let $S,T\subseteq M$ be locally flat, embedded, and homotopic $2$-spheres, both with trivial normal bundle.
Then $M_S$ and $M_T$ are simple homotopy equivalent via a simple homotopy equivalence that restricts to the identity on the canonically identified boundaries $\partial M_S = \partial M_T$.
\end{theorem}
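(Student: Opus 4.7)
The plan is to construct an explicit simple homotopy equivalence $\phi \colon M_S \to M_T$ by first building a self-map $\Phi \colon M \to M$, homotopic to $\id_M$ rel $\partial M$, which takes a closed tubular neighborhood $\ol\nu S$ homeomorphically onto $\ol\nu T$ in a framing-preserving manner. To build $\Phi$, I would thicken the homotopy $H \colon S^2 \times I \to M$ between $S$ and $T$ to a map $\ol H \colon \ol\nu S \times I \to M$ using the trivial normal bundle data on both ends, and apply the homotopy extension property for the cofibration $\ol\nu S \cup \partial M \hookrightarrow M$ to produce the desired $\Phi$. A further homotopy of $\Phi$, supported in a collar of $\ol\nu T$, is then used to arrange that $\Phi(M \sm \nu S) \subseteq M \sm \nu T$, so that $\Phi$ genuinely sends exteriors to exteriors.

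Given such $\Phi$, the map $\phi \colon M_S \to M_T$ is defined to be $\Phi|_{M \sm \nu S}$ on the exterior piece and the identity on the glued-back Gluck-twist piece $D^2 \times S^2$. These two definitions agree on the common $S^1 \times S^2$ boundary because $\Phi$ preserves the chosen framings of $\ol\nu S$ and $\ol\nu T$, and the same Gluck twist diffeomorphism $G$ is used in constructing both $M_S$ and $M_T$. The resulting map $\phi$ is well-defined, restricts to the identity on $\partial M = \partial M_S = \partial M_T$, and is a homotopy equivalence because $\Phi \simeq \id_M$.

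To show that $\phi$ is moreover a simple homotopy equivalence, I would apply the sum formula for Whitehead torsion to the decomposition $M_S = (M \sm \nu S) \cup_{S^1 \times S^2} (D^2 \times S^2)$ (and its analogue for $M_T$). Since $\phi$ is the identity on the $D^2 \times S^2$ piece and on the $S^1 \times S^2$ overlap, $\tau(\phi)$ reduces to $\tau(\Phi|_{M \sm \nu S})$. Applying the sum formula once more to the decomposition $M = (M \sm \nu S) \cup \ol\nu S$, using that $\Phi$ is a homeomorphism on $\ol\nu S$ and on the common $S^1 \times S^2$ boundary, and that $\tau(\Phi) = 0$ since $\Phi \simeq \id_M$, we conclude $\tau(\Phi|_{M \sm \nu S}) = 0$, so $\tau(\phi) = 0$.

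The main obstacle will be arranging that $\Phi$ properly sends $M \sm \nu S$ into $M \sm \nu T$. The naive map produced by the homotopy extension property can have its image wrap around and pass through $\ol\nu T$ from the wrong side, so correcting this via the collar deformation, and verifying that the correction can be chosen to keep $\Phi$ homotopic to $\id_M$ rel $\partial M$ and framing-preserving on $\ol\nu S$, is the technical heart of the argument.
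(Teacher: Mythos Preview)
Your approach has a genuine gap, and it is not the obstacle you identify. Even granting that you can arrange $\Phi(M \sm \nu S) \subseteq M \sm \nu T$ (which is itself nontrivial---a deformation supported only in a collar of $\ol\nu T$ cannot move points that $\Phi$ sends deep into $\nu T$, say onto $T$ itself), the restriction $\Phi|_{M \sm \nu S} \colon M \sm \nu S \to M \sm \nu T$ need not be a homotopy equivalence. For instance, take $M = S^4$, let $S$ be the unknot, and let $T$ be any nontrivial $2$-knot: then $\pi_1(M \sm \nu S) \cong \Z$ while $\pi_1(M \sm \nu T)$ is the knot group of $T$, which is typically nonabelian, so no map between these exteriors can be a homotopy equivalence. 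This breaks your Whitehead torsion computation, since the sum formula you invoke requires each restricted map to be a homotopy equivalence; otherwise $\tau(\Phi|_{M \sm \nu S})$ is not even defined. Your earlier assertion that $\phi$ is a homotopy equivalence ``because $\Phi \simeq \id_M$'' is also too quick: it does hold, but it needs a five-lemma argument comparing the long exact sequences of the pairs $(M_S, D^2 \times S^2)$ and $(M, \ol\nu S)$ with $\Z[\pi_1(M)]$ coefficients via excision, not the gluing reasoning you suggest.

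The paper sidesteps any comparison of the exteriors. It first proves, via handle calculus inside a neighbourhood of $S$, a homeomorphism of pairs $(M_S \# \CP^2, \CP^1) \cong (M \# \CP^2, S \# \CP^1)$ restricting to the identity on $\partial M$. Thus $M_S$ and $M_T$ are both blow-downs of $N := M \# \CP^2$ along the homotopic $(+1)$-spheres $A := S \# \CP^1$ and $B := T \# \CP^1$. It then shows $N^A \simeq_s N^B$ via a capping trick: writing $\capp(W,\beta) := W \cup_\beta D^3$ and noting $\capp(\CP^2,\sigma) \simeq_s S^4$, one has
\[
N^A \simeq_s N^A \# \capp(\CP^2,\sigma) \cong \capp(N,A) \simeq_s \capp(N,B) \cong N^B \# \capp(\CP^2,\sigma) \simeq_s N^B,
\]
where the middle step holds precisely because $A$ and $B$ are homotopic in $N$.
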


We will prove \cref{thm:gluck-she} in \cref{sec:gluck-she}.
For certain fundamental groups, simple homotopy equivalence can be upgraded to homeomorphism. This includes some fundamental groups for which there is an explicit homeomorphism classification~\cites{F,FQ,Hambleton-Kreck:1988-1,Wang-nonorientable,HKT}. However, the class of known good groups is much larger than the class of groups for which explicit classification results are available, and for many good groups, even in the absence of a known homeomorphism classification, it is possible to use the surgery exact sequence to conclude that simple homotopy equivalent $4$-manifolds are homeomorphic. We describe the outcome of both methods in the following corollary.

Recall that an identification of the fundamental group $\pi_1(M)$ of a $4$-manifold $M$ with a group~$\pi$ determines a map $c_M \colon M \to B\pi$, up to homotopy, classifying the universal cover, where $B\pi \simeq K(\pi,1)$ is the classifying space.

\begin{corollary}\label{cor:gluck-homeo}
Let $M$ be a compact $4$-manifold, and let $S,T\subseteq M$ be locally flat, embedded, and homotopic $2$-spheres, both with trivial normal bundle. Let $\pi:= \pi_1(M)$ and let $w \colon \pi \to C_2 = \{\pm 1\}$ be the orientation character. Suppose in addition that one of the following holds.
\begin{enumerate}[(i)]
  \item\label{item:i} $M$ is closed, orientable, and $\pi$ is cyclic.
  \item\label{item:iii} The following four conditions hold.
  \begin{enumerate}
      \item[\mylabel{item:G}{(G)}] $\pi$ is good;
      \item[\mylabel{item:A4}{(A4)}] the assembly map $A_4 \colon H_4(\pi;\LL\langle1\rangle^w)\to L_4^s(\Z\pi,w)$ is injective;
      \item[\mylabel{item:A5}{(A5)}] the assembly map $A_5\colon H_5(\pi;\LL\langle1\rangle^w)\to L_5^s(\Z\pi,w)$ is surjective;
      \item[\mylabel{item:H}{(H)}] the map $(c_M)_* \colon H_3(M;\Z/2)\to H_3(\pi;\Z/2)$ is surjective, for some choice of classifying map $c_M$.
  \end{enumerate}
\end{enumerate}
Then $M_S$ and $M_T$ are homeomorphic.
\end{corollary}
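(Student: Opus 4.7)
In both cases, the plan starts from the class $\xi := [M_T, f] \in \mathcal{S}^s_{\TOP}(M_S, \partial)$ in the simple topological structure set of $M_S$ rel boundary, defined by the simple homotopy equivalence $f\colon M_T \to M_S$ furnished by \cref{thm:gluck-she}. The goal in both cases is to show that $\xi$ is the basepoint, so that $f$ is homotopic rel boundary to a homeomorphism.

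For part \ref{item:i}, I would appeal to the explicit homeomorphism classifications of closed, orientable, topological 4-manifolds with cyclic fundamental group, assembled from Freedman~\cite{F}, Freedman--Quinn~\cite{FQ}, and Hambleton--Kreck~\cite{Hambleton-Kreck:1988-1}. These classify such manifolds via the quadratic 2-type (which encodes $\pi_1$, $w_2$-type, the equivariant intersection form, and the $k$-invariant) together with the Kirby--Siebenmann invariant $\ks$. The quadratic 2-type is preserved under the simple homotopy equivalence $f$, and since the Gluck twist is a local modification performed by the smooth diffeomorphism $G$ of $S^1 \times S^2$ along smoothly trivialised tubular neighbourhoods, one expects $\ks(M_S)=\ks(M)=\ks(M_T)$; the desired homeomorphism then follows from the classification.

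For part \ref{item:iii}, the plan is to use the topological surgery exact sequence rel boundary, valid in dimension $4$ thanks to \ref{item:G}:
\[
L_5^s(\Z\pi,w) \xrightarrow{\alpha} \mathcal{S}^s_{\TOP}(M_S, \partial) \xrightarrow{\nu} \mathcal{N}_{\TOP}(M_S, \partial) \xrightarrow{\theta} L_4^s(\Z\pi,w),
\]
together with Ranicki's algebraic surgery exact sequence. The latter identifies $\mathcal{N}_{\TOP}(M_S, \partial)$ with an $\LL\langle 1\rangle$-(co)homology group of $M_S$ and expresses $\theta$ as the composition of a comparison map to the corresponding $\LL\langle 1\rangle$-(co)homology of $B\pi$ (via $c_{M_S}$) followed by the assembly map $A_4$. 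Since $\xi$ is already a homotopy equivalence, $\theta(\nu(\xi))=0$; condition \ref{item:A4} then forces the image of $\nu(\xi)$ under $(c_{M_S})_\ast$ to vanish in $H_4(\pi;\LL\langle 1\rangle^w)$. Combining condition \ref{item:H} with an Atiyah--Hirzebruch spectral sequence analysis of $\LL\langle 1\rangle$-homology, whose relevant contributions in the degrees of interest come from $H_\ast(-;\Z/2)$ and from $H_\ast(-;\Z)$, one controls the kernel of the comparison map to conclude $\nu(\xi)=0$. Hence $\xi$ lies in the image of $\alpha$, and by a dual argument combining \ref{item:A5} with \ref{item:H} this $L_5^s$-action on the basepoint factors through $H_5(M_S;\LL\langle 1\rangle^w)$, on which Ranicki's algebraic surgery sequence acts trivially, yielding $\xi=0$.

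The principal obstacle in part \ref{item:iii} will be combining \ref{item:A4}, \ref{item:A5}, and \ref{item:H} correctly to extract the triviality of $\xi$ from the algebraic surgery sequence, which requires careful tracking of the Atiyah--Hirzebruch filtration on $\LL\langle 1\rangle$-homology in degrees $4$ and $5$ and consistent handling of the twisted $w$-action throughout. In part \ref{item:i}, the only subtle point is the verification that $\ks(M_S)=\ks(M_T)$, for which the smoothness of the gluing map $G$ and the local nature of the Gluck construction should be the key inputs.
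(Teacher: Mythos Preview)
Your approach to part~\ref{item:i} is essentially the paper's: invoke the Freedman/Freedman--Quinn/Hambleton--Kreck classification for closed orientable $4$-manifolds with cyclic $\pi_1$, and verify $\ks(M_S)=\ks(M_T)$ via additivity of $\ks$ under gluing (this is precisely what your ``local nature of the Gluck twist'' amounts to).

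For part~\ref{item:iii} there is a genuine gap. You assert that \ref{item:A4} and \ref{item:H}, together with an Atiyah--Hirzebruch analysis, force the normal invariant of $\xi$ to vanish. This is false in general. The AHSS contributions to $H_4(-;\LL\langle1\rangle^w)$ come from $H_0(-;\Z^w)$ and $H_2(-;\Z/2)$, and what one actually obtains (and what the paper proves) is an identification
\[
\ker\big((c_{M_S})_4\colon H_4(M_S;\LL\langle1\rangle^w)\to H_4(\pi;\LL\langle1\rangle^w)\big)\;\cong\;\ker\big(H_2(M_S;\Z/2)\to H_2(\pi;\Z/2)\big).
\]
The right-hand side contains the mod-$2$ Hurewicz image of $\pi_2(M_S)$ and is typically nonzero. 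Condition~\ref{item:H} concerns $H_3$ and is used only to compare the $d_3$-differentials (and, in degree~$5$, to obtain surjectivity of $(c_{M_S})_5$); it does nothing to kill this kernel. So you cannot conclude that the normal invariant of $\xi$ vanishes, and your subsequent $L_5^s$ step never applies.

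The paper's argument takes a different shape and uses an ingredient absent from your outline. Conditions~\ref{item:A5} and~\ref{item:H} are used to show that $\sigma_5$ is surjective, hence $\eta$ is \emph{injective}; so it suffices to match $\eta([f])$ with $\eta([g])$ for some \emph{self} homotopy equivalence $g$ of $M_S$. Having identified $\im\eta=\ker\sigma_4$ with the $H_2(-;\Z/2)$-kernel above, the paper then invokes a result of Cochran--Habegger: every class in that kernel represented by a sphere with $w_2=0$ is the normal invariant of a self-equivalence of $M_S$. Consequently the quotient of $\mathcal{S}^s(M_S,\partial M_S)$ by self-equivalences has at most two elements; when $w_2(\wt M_S)\neq 0$ the star partner $\star M_S$ realises the other class, and the two are distinguished by $\ks$. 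Since $\ks(M_S)=\ks(M_T)$, the class $[f]$ coincides with that of a self-equivalence, giving an $s$-cobordism and hence a homeomorphism. Note in particular that the correct target is not ``$\xi$ is the basepoint'' but ``$\xi$ agrees with the class of some self-equivalence''; the Cochran--Habegger realisation step and the $\ks$ comparison are essential and missing from your plan.
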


The conditions of \cref{cor:gluck-homeo} are in fact conditions under which simple homotopy equivalent $4$-manifolds with equal Kirby--Siebenmann invariants are homeomorphic, irrespective of whether they arise via Gluck twisting. We explain the conditions in \cref{cor:gluck-homeo}\,\eqref{item:iii} further in \cref{rem:conditions}. For now we limit ourselves to pointing out that Condition ~\eqref{item:iii} is satisfied for the solvable Baumslag--Solitar groups $\langle a,b \mid aba^{-1} = b^k \rangle \cong  \Z\big[\tmfrac{1}{k}\big] \rtimes \Z$, including $\Z\oplus \Z$ when $k=1$. Condition~\ref{item:H} is not satisfied for $\Z\oplus \Z\oplus \Z$.  Details on these examples, and more, can also be found in \cref{rem:conditions}.

The assumptions in~\cref{thm:conc-gluck-thm,thm:gluck-she}, that $S$ and $T$ are concordant/homotopic, are in general necessary, since Gluck twisting along spheres that are not homotopic can result in homotopy inequivalent 4-manifolds.
For example, Gluck twisting along $S^2\times \{p\}\subseteq S^2\times S^2$ results in the twisted fibre bundle $S^2 \wt{\times} S^2$, whereas Gluck twisting along an unknotted $S^2$ returns $S^2 \times S^2$ again (see \cref{prop:unknotted-gluck} below). These 4-manifolds are not homotopy equivalent and so are certainly not $s$-cobordant. For non-homotopic 2-spheres, Gluck twisting can result in homotopy equivalent manifolds with different homeomorphism types.
Fan~\cite{fan-thesis}*{Theorem~4.5} gave an example of a simply connected $4$-manifold with nonempty boundary, together with a homotopically essential embedded 2-sphere, such that the Gluck twist preserves the homotopy type rel.\ boundary but changes the homeomorphism type. See \cref{glucktable} for a summary of these, and other, examples.

As a counterpoint to \cref{cor:gluck-homeo}, we give new examples in this direction, with the additional properties that the spheres are homotopic and the ambient manifold is closed, as follows.

\begin{proposition}\label{thm:gluck-giving-fake}
There exists a closed, orientable $4$-manifold $M$ and locally flat, embedded, and homotopic $2$-spheres $S,T\subseteq M$, both with trivial normal bundle, such that $M_S$ and $M_T$ are simple homotopy equivalent but not homeomorphic.
\end{proposition}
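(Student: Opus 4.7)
The plan is to exhibit an explicit triple $(M, S, T)$, relying on \cref{thm:gluck-she} for the simple homotopy equivalence and on a Kirby--Siebenmann computation to obstruct the existence of a homeomorphism. The broad strategy is to use Freedman--Quinn topological embedding theory to produce two locally flat embedded $2$-spheres in a common regular homotopy class whose Gluck twists are distinguished by their Kirby--Siebenmann invariants.

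Concretely, I would choose a closed orientable $4$-manifold $M$ with good fundamental group, together with an immersed framed $2$-sphere $f\colon S^2 \to M$ with vanishing Wall self-intersection. By the Freedman--Quinn embedding theorem, $f$ is regularly homotopic to a locally flat embedding, and such embeddings come in (at most) two ambient isotopy classes, distinguished by the Kervaire--Milnor invariant $\km \in \Z/2$. Call the resulting embedded spheres $S$ and $T$. Both have trivial normal bundle and are homotopic as maps $S^2 \to M$, so \cref{thm:gluck-she} immediately produces a simple homotopy equivalence $M_S \simeq M_T$.

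The remaining task is to show $M_S \not\homeo M_T$. For this, one analyses how the Kirby--Siebenmann invariant changes under a Gluck twist. Unpacking the definition of the Gluck twist as the regluing of $D^2 \times S^2$ via the nontrivial element of $\pi_1(SO(3))$, and tracking the smoothings of $M_S$ and $M_T$ along the spheres, one derives a formula of the shape
\[
\ks(M_S) - \ks(M_T) \equiv \km(S) - \km(T) \pmod 2.
\]
By our choice of $S$ and $T$, the right-hand side equals $1$, so $M_S$ and $M_T$ have different Kirby--Siebenmann invariants and are in particular not homeomorphic.

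The main obstacle is verifying the Kirby--Siebenmann change formula above, and checking that the Kervaire--Milnor difference is indeed nonzero for some concrete choice of $M$ and $f$. One way to sidestep a general formula is to choose $M$, $S$, $T$ so explicitly that $M_S$ and $M_T$ are identified with already-known topologically distinct $4$-manifolds, for instance by taking a closed-up analogue of Fan's example \cite{fan-thesis} (doubled along its boundary or filled in via a suitable simply connected piece) or by building inside a connect summand of the form $N \# (S^2 \times S^2)$ where two Freedman--Quinn realizations of the same regular homotopy class give two embedded spheres with opposite Kervaire--Milnor invariants.
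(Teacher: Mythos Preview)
Your proposal has a genuine gap: the displayed formula
\[
\ks(M_S) - \ks(M_T) \equiv \km(S) - \km(T) \pmod 2
\]
is false. The Kirby--Siebenmann invariant is additive under gluing along boundary, and this additivity is \emph{indifferent to the gluing homeomorphism}. Hence
\[
\ks(M_S) = \ks(E_S) + \ks(D^2 \times S^2) = \ks(M) = \ks(E_T) + \ks(D^2 \times S^2) = \ks(M_T),
\]
so $\ks$ can never distinguish $M_S$ from $M_T$; this is exactly equation~\eqref{eq:ks} in the paper, used there to \emph{prove} homeomorphism in the cyclic case. Any argument that tries to separate $M_S$ and $M_T$ by $\ks$ is therefore doomed, and the ``tracking smoothings'' heuristic you describe does not survive this computation. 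Your fallback suggestions (doubling Fan's example, or a generic $N\#(S^2\times S^2)$ with opposite Kervaire--Milnor invariants) inherit the same problem, because in all of them you are still trying to detect the difference via $\ks$.

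The paper's construction avoids $\ks$ entirely and instead produces $M_S$ and $M_T$ that are not even \emph{stably} homeomorphic. It takes $M = E \# E \# \CP^2 \# \ol{\CP^2}$, where $E$ is the orientable non-spin $S^2$-bundle over $\RP^2$, and uses the star-partner homeomorphism $E\#\CP^2 \cong \star E \# \star\CP^2$ twice to get a homeomorphism $\psi\colon \star E \# \star E \# \CP^2 \to E \# E \# \CP^2$. Setting $S = \CP^1 \# \ol{\CP^1}$ and $T = \psi(\CP^1)\#\ol{\CP^1}$ (which are homotopic after adjusting orientation), the Gluck twist on $\CP^1\#\ol{\CP^1}$ in $\CP^2\#\ol{\CP^2}$ produces $S^2\times S^2$, so $M_S \cong E\#E\#(S^2\times S^2)$ and $M_T \cong \star E\#\star E\#(S^2\times S^2)$. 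These have equal $\ks$ (both zero, since $\ks(\star E)+\ks(\star E)=0$), but Teichner showed $E\#E$ and $\star E\#\star E$ are not stably homeomorphic, so neither are $M_S$ and $M_T$. The obstruction lives in a secondary invariant beyond $\ks$, not in $\ks$ itself.
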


\begin{table}
\ra{1.5}
    \begin{tabular}{@{}p{45mm}p{10mm}p{10mm}p{10mm}p{10mm}p{10mm}p{17mm}}
        \toprule
            &\tilt{$S$, $T$ homotopic}&\tilt{$M$ orientable}&\tilt{$M$ closed} &\tilt{$M_S\simeq_s M_T$}    &\tilt{$M_S\cong_\Top M_T$}&\tilt{$M_S\cong_\Diff M_T$}\\
            \midrule
        \makecell{$M=S^2\times S^2$, $S=S^2\times \{*\}$,\\ $T$=unknot.}  &\xmark &\cmark &\cmark &\xmark &\xmark &\xmark\\
        \makecell{$M=(S^1\wt\times S^3) \# (S^2\times S^2)$,\\ $T$=unknot~\cite{akbulut-fake-gluck} cf.~~\cite{torres-gluck}.}  &\xmark &\xmark &\cmark &\cmark &\cmark &\xmark\\
        Examples by Fan~\cite{fan-thesis}.  &\xmark &\cmark &\xmark &\cmark &\xmark &\xmark\\
        Examples from Prop.~\ref{thm:gluck-giving-fake}.  &\cmark &\cmark &\cmark &\cmark &\xmark &\xmark\\
        Examples from Prop.~\ref{thm:gluck-gives-exotic}.  &\cmark &\xmark &\cmark &\cmark &\cmark &\xmark\\
        \bottomrule
    \end{tabular}
\caption{The properties of some interesting examples of Gluck twists. The symbol for the equivalence relations in the top row are defined in \cref{section:conventions}.}
\label{glucktable}
\end{table}

\subsection{Effect on diffeomorphism type}

The following proposition is well-known, but important to observe. It shows that the classical idea of comparing $M_S$ and $M$ is a special case of comparing $M_S$ and $M_T$, by taking $T$ to be an unknotted 2-sphere.

\begin{proposition}\label{prop:unknotted-gluck}
Let $M$ be a compact, smooth $4$-manifold. If $T \subseteq M$ is an unknotted smooth $2$-knot, then $M$ and $M_T$ are diffeomorphic.
\end{proposition}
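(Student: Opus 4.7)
The plan is to localise the Gluck twist to a 4-ball $N \subset M$ and then reduce to the statement that the Gluck twist of the standard unknot $T_0$ in $D^4$ is again $D^4$, via a diffeomorphism fixing $\partial D^4 = S^3$. Since $T$ is unknotted, it bounds a smooth 3-ball $B \subset M$, and a slight thickening of $T \cup B$ is a smoothly embedded closed 4-ball $N \subset M$ containing $\nu T$ in its interior, with the pair $(N, T)$ diffeomorphic to the standard $(D^4, T_0)$ by uniqueness of the unknot in a 4-ball. Since the Gluck twist is supported in $\nu T \subset N$, a diffeomorphism $N \to N_T$ that is the identity on $\partial N$ extends by the identity over $M \sm \Int N$ to give the desired diffeomorphism $M \to M_T$.

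For the local statement, the crucial observation is that the Gluck map $G(x,y) = (x, R(x) \cdot y)$ extends to a self-diffeomorphism $\widetilde G \colon S^1 \times D^3 \to S^1 \times D^3$ via $\widetilde G(x, z) = (x, R(x) \cdot z)$, using the $SO(3)$-action on all of $D^3 \subseteq \RR^3$. I would then use the standard decomposition $S^4 = (S^1 \times D^3) \cup_{S^1 \times S^2} (D^2 \times S^2)$, in which the unknot corresponds to $\{0\} \times S^2 \subset D^2 \times S^2$, and define $\Phi \colon S^4 \to S^4_{T_0}$ to be the identity on the $D^2 \times S^2$-piece and $\widetilde G$ on the $S^1 \times D^3$-piece. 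Well-definedness and smoothness across the common boundary $S^1 \times S^2$ is precisely the point where everything has to match: the Gluck gluing in $S^4_{T_0}$ identifies $p \in \partial(D^2 \times S^2)$ with $G(p) \in \partial(S^1 \times D^3)$, and $\widetilde G|_{S^1 \times S^2} = G$, so $\Phi$ assembles to a diffeomorphism.

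To descend from $S^4$ to a 4-ball, pick a small open 4-ball $\nu' \subset (D^2 \times S^2) \sm \nu T_0$, on which $\Phi$ is the identity. Setting $D^4 := S^4 \sm \Int \nu'$ realises the standard $(D^4, T_0)$, and one computes $(D^4)_{T_0} = S^4_{T_0} \sm \Int \nu'$. The restriction $\Phi|_{D^4} \colon D^4 \to (D^4)_{T_0}$ is then a diffeomorphism that is the identity on $\partial D^4 = \partial \nu' = S^3$, completing the local step. The main bookkeeping obstacle is checking well-definedness of $\Phi$ along the $S^1 \times S^2$ gluing with correct orientation conventions on the extension $\widetilde G$ versus $G$; once this matching is set up correctly, the rest of the argument is formal.
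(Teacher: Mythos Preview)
Your approach is essentially the same as the paper's: both observe that $G$ extends over $S^1 \times D^3$ to produce a self-diffeomorphism of $S^4$, and then localise to a $4$-ball containing the unknot. There is, however, a slip in your descent step. With $\nu T_0$ taken to be the interior of the $D^2 \times S^2$ piece (which is precisely what makes your $\Phi$ well-defined across the gluing), the set $(D^2 \times S^2) \sm \nu T_0$ is just the $3$-manifold $S^1 \times S^2$ and contains no open $4$-ball. If instead you shrink $\nu T_0$ to a strictly smaller tubular neighbourhood, your $\Phi$ (identity on all of $D^2 \times S^2$, equal to $\widetilde G$ outside) no longer matches across the Gluck gluing at $\partial\overline{\nu} T_0$, since both sides map $p \mapsto p$ but in $S^4_{T_0}$ the point $p$ is glued to $G(p)$. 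So as written there is no $4$-ball $\nu'$ disjoint from $\nu T_0$ on which $\Phi$ is the identity.

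The repair is easy and preserves your explicit construction: choose the representative $R \colon S^1 \to SO(3)$ to be constantly the identity on an arc $A \subset S^1$ (this does not change the isotopy class of $G$). Then $\widetilde G$ is the identity on $A \times D^3 \subset S^1 \times D^3$, which lies in the exterior of $T_0$, and you may take $\nu'$ there. The paper bypasses this by invoking the general fact that every orientation-preserving self-diffeomorphism of $S^4$ can be isotoped to fix a ball.
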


\begin{proof}
First let $M= S^4$. Then $M_T \cong M$ because the Gluck twist $G$ extends to a diffeomorphism of $S^1 \times D^3$, the complement in $S^4$ of the trivial $2$-knot $S$.  Since every orientation-preserving diffeomorphism can be isotoped to fix a ball, this implies that for an unknotted $2$-sphere $T$ we have a rel.\ boundary diffeomorphism $D^4_T \cong D^4$. Now for a general $M$, every trivial $2$-knot $T$ is contained in a $4$-ball, and the result follows.
\end{proof}

Gluck twisting along nontrivial, smooth $2$-knots in $S^4$ is a well-known method to produce smooth homotopy 4-spheres, which could potentially be counterexamples to the smooth $4$-dimensional Poincar\'{e} conjecture. Progress thus far has been in the opposite direction: there are several families of nontrivial 2-knots in $S^4$ for which it has been proven that Gluck twisting along them yields $S^4$ again~\cites{gluck62,gordon76,pao78,litherland79,nashstip,naylorschwartz}.

The topological $4$-dimensional Poincar\'{e} conjecture or the homeomorphism classification of simply connected $4$-manifolds is usually cited to prove that Gluck twisting on $S^4$ preserves homeomorphism type. Since every $2$-knot in $S^4$ is concordant to the trivial $2$-knot~\cite{kervaireslice}, the combination of \cref{thm:conc-gluck-thm} and \cref{prop:unknotted-gluck} gives a different route. This method, and also that of \cref{cor:gluck-homeo}, both have the advantage that they can be applied to the study of Gluck twisting on 2-knots in more general non-simply connected $4$-manifolds.  We hope that readers will be motivated to investigate this.

For non-homotopic 2-spheres, Gluck twisting can change the smooth structure of a fixed 4-manifold. Akbulut~\cite{akbulut-fake-gluck} constructed an example of a homotopically essential $2$-knot $S$ in $P:=(S^1 \wt{\times} S^3) \# (S^2 \times S^2)$ such that Gluck twisting changes the smooth structure. That is $Q:= P_S$ is homeomorphic~\cite{Wang-nonorientable} but not diffeomorphic to $P$.  Torres~\cite{torres-gluck} extended this construction to other nonorientable $4$-manifolds.

The following proposition sharpens this result by exhibiting two homotopic spheres $S$ and $T$ in a closed, nonorientable $4$-manifold~$M$ such that Gluck twisting gives rise to an exotic pair $M_S$ and $M_T$.  Our construction makes use of the Cappell--Shaneson exotic $\RP^4$~\cites{CS-new-four-mflds-bams,CS-new-four-mflds-annals}, which we denote by $R$. Recall that two $4$-manifolds $M$ and $N$ are said to be \emph{stably diffeomorphic} if there exist $n,n'$ such that $M\# n(S^2\times S^2)\cong N\# n'(S^2\times S^2)$.

\begin{proposition}\label{thm:gluck-gives-exotic}
There exists a pair of homotopic smooth $2$-knots $S$ and $T$ in $M := \RP^4 \# S^2 \wt{\times} S^2$, both with trivial normal bundle, such that $M_S \cong \RP^4 \# (S^2 \times S^2)$ and $M_T \cong R \# (S^2 \times S^2)$. In particular $M_S$ and $M_T$ are homeomorphic but not stably diffeomorphic.
\end{proposition}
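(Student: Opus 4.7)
Set $M := \RP^4 \# (S^2 \wt\times S^2)$, and fix a smoothly embedded $2$-knot $\Sigma \subset \RP^4$ with trivial normal bundle whose Gluck twist yields $R$, i.e.\ $(\RP^4)_\Sigma \cong R$. Push $\Sigma$ into the $\RP^4$ summand of $M$, disjoint from the connect-sum region. Set $S := \{\mathrm{pt}\} \wt\times S^2$, a fiber of the twisted bundle summand, and $T := S \#_\gamma \Sigma$, the tubed connect sum of $S$ and $\Sigma$ along a suitably framed arc $\gamma$ disjoint from the connect-sum region.

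For the computation of $M_S$, I would use the classical fact (immediate from the classification of $S^2$-bundles over $S^2$ by $\pi_1(SO(3)) = \Z/2$) that the Gluck twist on a fiber of $S^2 \wt\times S^2$ converts it into the trivial bundle $S^2 \times S^2$. Combined with the compatibility of the Gluck twist with connected sums, this yields $M_S \cong \RP^4 \# (S^2 \times S^2)$. For the computation of $M_T$, I would invoke the following lemma, whose proof is a local handle-calculus argument: for two disjoint $2$-spheres $S_1, S_2$ in a $4$-manifold $N$, each with trivial normal bundle, tubed along a suitably framed arc, one has $N_{S_1 \#_\gamma S_2} \cong (N_{S_1})_{S_2}$. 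Granting this,
\[
M_T \cong (M_S)_\Sigma \cong (\RP^4)_\Sigma \# (S^2 \times S^2) \cong R \# (S^2 \times S^2).
\]

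To see that $S$ and $T$ are homotopic, note that in $\pi_2(M)$ we have $[T] = [S] + g \cdot [\Sigma]$ for some $g \in \pi_1(M)$ depending on $\gamma$. Since the universal cover of $\RP^4$ is $S^4$, $\pi_2(\RP^4) = 0$, and hence $\Sigma$ is null-homotopic in $\RP^4$ and in $M$; thus $[T] = [S]$ and $S$ and $T$ are homotopic as maps. The conclusion $M_S \homeo M_T$ follows from \cref{thm:gluck-she} combined with the homeomorphism classification of closed non-orientable $4$-manifolds with $\pi_1 = \Z/2$~\cites{Wang-nonorientable, HKT}, or more directly from Freedman's $\RP^4 \homeo R$. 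Finally, $M_S$ and $M_T$ are not stably diffeomorphic because $R$ and $\RP^4$ can be smoothly distinguished after any number of $S^2 \times S^2$ stabilizations.

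The main technical obstacle is verifying the tubing lemma $N_{S_1 \#_\gamma S_2} \cong (N_{S_1})_{S_2}$, which requires a careful handle-calculus verification accounting for the framing of $\gamma$. The existence of $\Sigma \subset \RP^4$ with $(\RP^4)_\Sigma \cong R$ is also implicitly required; if needed, this can be produced via a Gluck presentation of $R$ underlying the Cappell--Shaneson/Akbulut construction.
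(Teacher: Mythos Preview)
Your approach is genuinely different from the paper's, and it is worth comparing the two.

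The paper does not tube a fibre with a sphere in $\RP^4$.  Instead it uses the identification $S^2 \wt\times S^2 \cong \CP^2 \# \ol{\CP^2}$ and Akbulut's diffeomorphism $R \# \CP^2 \cong \RP^4 \# \CP^2$.  The image $U \subset \RP^4 \# \CP^2$ of $\CP^1$ under this diffeomorphism is a $+1$-sphere, and a short computation with the equivariant intersection form on $\pi_2(\RP^4 \# \CP^2)$ shows that $U$ is homotopic to $\CP^1$.  Then $S := \CP^1 \# \ol{\CP^1}$ and $T := U \# \ol{\CP^1}$ are homotopic spheres in $M$, and the Gluck twist on each is computed by recognising it as the inverse of the Gluck twist $S^2 \times S^2 \rightsquigarrow \CP^2 \# \ol{\CP^2}$, yielding $M_S \cong \RP^4 \# (S^2 \times S^2)$ and $M_T \cong R \# (S^2 \times S^2)$ directly.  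No tubing lemma is needed, and the only external input is Akbulut's $\CP^2$-stable diffeomorphism.

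Your route is more geometric, but it rests on two inputs you correctly flag as requiring work.  The tubing identity $N_{S_1 \#_\gamma S_2} \cong (N_{S_1})_{S_2}$ is a folk statement and can be proven, but it is not a one-line citation.  More seriously, the existence of a smooth $\Sigma \subset \RP^4$ with $(\RP^4)_\Sigma \cong R$ is not, to my knowledge, directly available in the literature.  Akbulut's theorem gives $R \# \CP^2 \cong \RP^4 \# \CP^2$, and \cref{lemma:CP2-stable-diffeo} shows that this would \emph{follow} from the existence of your $\Sigma$, but the converse implication---that the image of $\CP^1$ under Akbulut's diffeomorphism can be isotoped to split as $\Sigma \# \CP^1$ for some $\Sigma$ lying entirely in the $\RP^4$ summand---is not automatic.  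Related facts are known about the double cover (a Cappell--Shaneson homotopy sphere arising as a Gluck twist of $S^4$), but descending this to $\RP^4$ equivariantly is an additional step.  The paper's approach sidesteps this issue entirely by working with the $+1$-sphere $U$ in $\RP^4 \# \CP^2$ rather than attempting to push everything into $\RP^4$.

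In summary: your plan would prove the proposition if both inputs are established, and it has the conceptual appeal of localising the exoticness to a single sphere in $\RP^4$; but as written it trades one clean citation (Akbulut's $\CP^2$-stable diffeomorphism) for two facts that each need independent justification.
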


The proof of this proposition uses neither \cref{thm:conc-gluck-thm} nor \cref{cor:gluck-homeo}.  We also remark that we do not produce a new example of an exotic pair of $4$-manifolds, but rather the proof uses the fact that $\RP^4$ and $R$ are known to not be stably diffeomorphic.

At present, there is no known example of the Gluck twisting operation changing the smooth structure of an orientable $4$-manifold.

\begin{question}\label{question-on-gluck}
Does there exist a smooth, \emph{orientable} $4$-manifold $M$ and a pair of smooth $2$-knots $S$ and $T$ in $M$ with trivial normal bundle such that $M_S$ and $M_T$ are homeomorphic but not diffeomorphic?
\end{question}

Akbulut and Yasui give a simple criterion when a Gluck twist on a smooth $4$-manifold with odd intersection form does not change its diffeomorphism type~\cite{akbulut-yasui}.

\cref{thm:conc-gluck-thm,cor:gluck-homeo}  provide methods to potentially show that $M_S$ and $M_T$ are homeomorphic, in the case that $S$ and $T$ are either homotopic or concordant. We hope that they will be useful to help answer Question~\ref{question-on-gluck}.

\subsection{Conventions}\label{section:conventions}

All $4$-manifolds are assumed to be connected and based, with a local orientation at the basepoint. Given a pair of manifolds, we use the symbol $\simeq$ for homotopy equivalence, $\simeq_s$ for simple homotopy equivalence, $\cong_\Top$ for homeomorphism, and $\cong_\Diff$ for diffeomorphism. When clear from context, we use $\cong$ instead of $\cong_\Top$ and $\cong_\Diff$. We use $C_2$ to denote the multiplicative group $\{\pm 1\}$.

\subsection*{Outline}
\cref{thm:conc-gluck-thm} is proven in \cref{sec:conc-gluck-thm}, and \cref{thm:gluck-she} in \cref{sec:gluck-she}.  In~\cref{sec:gluck-homeo} we prove~\cref{cor:gluck-homeo}. \cref{thm:gluck-giving-fake,thm:gluck-gives-exotic} are proven in~\cref{sec:fake-exotic}.

\subsection*{Acknowledgements}

Thanks to Selman Akbulut for helping to clarify the state of the art with regards to Gluck twisting.
We thank the Max Planck Institute for Mathematics in Bonn and Durham University for hospitality.
Finally we are grateful to an anonymous referee for their comments which helped us to improve the exposition.

DK was supported by the Deutsche Forschungsgemeinschaft under Germany's Excellence Strategy - GZ 2047/1, Projekt-ID 390685813.
MP was partially supported by EPSRC New Investigator grant EP/T028335/1 and EPSRC New Horizons grant EP/V04821X/1.

\section{\texorpdfstring{$s$-cobordism}{s-cobordism}}\label{sec:conc-gluck-thm}

In this section we prove \cref{thm:conc-gluck-thm}, that for $S$ and $T$ concordant we have an $s$-cobordism between $M_S$ and $M_T$.
We start with a straightforward lemma.

\begin{lemma}\label{lem:pi1}
Let $M$ be a compact $4$-manifold, and let $S\subseteq M$ be a locally flat embedded $2$-sphere with trivial normal bundle. Then $\pi_1(M) \cong \pi_1(M_S)$.
\end{lemma}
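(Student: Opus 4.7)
The plan is to apply van Kampen's theorem to the two decompositions
\[ M = (M \setminus \nu S) \cup_{\mathrm{id}} (D^2 \times S^2), \qquad M_S = (M \setminus \nu S) \cup_{G} (D^2 \times S^2), \]
and to show that the Gluck twist $G$ induces the identity on $\pi_1(S^1 \times S^2)$, so that the two resulting presentations coincide. Both pieces and the common boundary $S^1 \times S^2$ are path-connected, and $D^2 \times S^2$ is simply connected, so van Kampen gives in each case
\[ \pi_1(M) \cong \pi_1(M \setminus \nu S) \big/ \langle\langle j_*(\pi_1(S^1 \times S^2)) \rangle\rangle, \]
where $j \colon S^1 \times S^2 \hookrightarrow M \setminus \nu S$ is the inclusion, and similarly for $M_S$ but with $j \circ G$ in place of $j$. (We use the same basepoint in both pieces, which can be arranged since the common boundary is connected.)

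Thus it suffices to prove that $G_* \colon \pi_1(S^1 \times S^2) \to \pi_1(S^1 \times S^2)$ is the identity. The group $\pi_1(S^1 \times S^2) \cong \Z$ is generated by the loop $\gamma(x) = (x, y_0)$ for a fixed basepoint $y_0 \in S^2$. Its image under $G$ is $x \mapsto (x, R(x) \cdot y_0)$, whose projection to the $S^1$-factor has degree $1$ and whose projection to the $S^2$-factor is null-homotopic since $\pi_1(S^2) = 0$. Hence $G_*(\gamma) = \gamma$ in $\pi_1(S^1 \times S^2)$, and the two normal closures above agree, yielding $\pi_1(M_S) \cong \pi_1(M)$. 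There is no real obstacle here; the only subtlety worth verifying is that the basepoint may be taken on the boundary $S^1 \times S^2$ so that $G$ indeed acts on the fundamental group of the gluing region before it is pushed into the complement.
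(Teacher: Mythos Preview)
Your proof is correct and follows essentially the same strategy as the paper: both arguments reduce to showing that $\pi_1(M)$ and $\pi_1(M_S)$ are the same quotient of $\pi_1(M\setminus\nu S)$, namely the quotient by the normal closure of a meridian of $S$. The only difference is in how the key point is verified. You compute directly that $G_*=\id$ on $\pi_1(S^1\times S^2)$ by projecting the image loop to the two factors; the paper instead decomposes $D^2\times S^2$ as a $2$-handle followed by a $4$-handle and observes that in the twisted gluing one may take the $2$-handle along the meridian $S^1\times\{y_0\}$ at a \emph{fixed point} $y_0$ of the rotations $R(x)$, so that the attaching circle is literally unchanged. Both viewpoints are equivalent and equally short; the paper's fixed-point observation is perhaps slightly more geometric, while your $G_*$ computation is more algebraic and does not require knowing that rotations of $S^2$ have fixed points.
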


\begin{proof}
The operation of attaching $D^2\times S^2$ to a 4-manifold along $S^1\times S^2$ can be decomposed into a $2$-handle attachment followed by a $4$-handle attachment. Both $M$ and $M_S$ are obtained from $M\sm \nu S$ by gluing in a copy of $D^2\times S^2$, and in both cases the $2$-handle is attached along a meridian of $S$ in $\partial\ol\nu S$. In the case of $M_S$, we can use the meridian at a fixed point of the rotation $R \colon S^1 \to SO(3)$.
Then both $\pi_1(M)$ and $\pi_1(M_S)$ are isomorphic to the quotient of $\pi_1(M \sm \nu S)$ by the normal subgroup generated by a meridian of $S$, and so in particular are isomorphic to each other.
\end{proof}

Having dealt with the fundamental group, we give the proof of \cref{thm:conc-gluck-thm}.
The proof in the smooth category uses handle decompositions and we do this case first. The relevant tools, such as topological transversality, the existence of normal bundles for locally flat, embedded submanifolds of $4$-manifolds, and the existence of topological handle decompositions for $5$-manifolds, are also available in the topological category, due to Quinn~\cite{Quinn-annulus} (see also ~\citelist{\cite{Kirby-Siebenmann:1977-1}\cite{FQ}\cite{freedman-book-flowchart}}).  The end of the upcoming proof explains how to use these tools to reduce the proof in the topological category to that in the smooth category.

\begin{proof}[Proof of \cref{thm:conc-gluck-thm}]
Suppose that $S$ and $T$ are concordant $2$-knots in $M$, and let $C$ be a concordance between them. Recall this means that $C$ is the image of a proper embedding $c \colon S^2 \times [0,1] \into M \times [0,1]$ with $c(S^2 \times [0,1]) \cap (M \times \{0\}) = c(S^2 \times \{0\}) = S$ and
$c(S^2 \times [0,1]) \cap (M \times \{1\}) = c(S^2 \times \{1\}) = T$. We first assume that $M$ and  $C$ are smooth.

For a submanifold $Y\subseteq Z$ with an open tubular neighbourhood $\nu Y$, denote the exterior of $Y$ by $E_Y$.   So in particular $E_S := M \sm \nu S$, $E_T := M \sm \nu T$, and $E_C := (M \times [0,1]) \sm \nu C$.
Identify a closed tubular neighbourhood $\ol{\nu} C$ of $C$ with $S^2 \times D^2 \times [0,1]$, and define $W:=(M \times [0,1])_C$, the result of gluing $S^2 \times D^2 \times [0,1]$ to $E_C$ via
\[G \times \Id_{[0,1]} \colon S^1 \times S^2 \times [0,1] \to \partial \ol{\nu} C \subseteq \partial E_C.\]
We will show that $W$ is a rel.\ boundary  $s$-cobordism between $M_S$ and $M_T$. By definition, this means that $W$ is a manifold with corners, with $\partial W \cong M_S \cup (\partial M \times [0,1]) \cup M_T$, and the inclusion maps $M_S\into W$ and $M_T\into W$ are simple homotopy equivalences. By symmetry it suffices to show that $M_S\into W$ is a simple homotopy equivalence; the proof for $M_T \into W$ is identical with $T$ replacing $S$. To prove this, we will show that the inclusion induced map $\pi_1(M_S) \to \pi_1(W)$ is an isomorphism, and that the relative chain complex $C_*(W,M_S;\Z[\pi_1(W)])$ is acyclic and has trivial Whitehead torsion.

First note that by~\cref{lem:pi1}, $\pi_1(M)\cong \pi_1(M_S) =: \pi$.  We assert that $\pi_1(W) \cong \pi$ and that the inclusion $M_S \into W$ induces an isomorphism.
Consider the following commutative diagram, where all the maps are induced by inclusion:
\begin{equation}\label{eq:scob-diag}
\begin{tikzcd}
\pi_1(M_S) \ar[d] & \pi_1(E_S) \ar[d] \ar[l, two heads] \ar[r, two heads] & \pi_1(M) \ar[d,"\cong"] \\
\pi_1(W) & \pi_1(E_C) \ar[l, two heads] \ar[r, two heads] & \pi_1(M \times [0,1]).
\end{tikzcd}
\end{equation}
Since the horizontal maps are surjective, the left and right vertical maps are those induced by the inclusion $E_S \to E_C$.
Let $K_S := \ker (\pi_1(E_S) \to \pi_1(M))$ and $K_C := \ker (\pi_1(E_C) \to \pi_1(M\times [0,1]))$.
Then we have an induced map $\iota \colon \pi_1(E_S)/K_S \to \pi_1(E_C)/K_C$, and since $\iota$ is identified via the inclusion maps with the right vertical map $\pi_1(M) \xrightarrow{\cong} \pi_1(M \times [0,1])$ in~\eqref{eq:scob-diag}, we learn that $\iota$ is an isomorphism.
The kernels of each of the horizontal maps in~\eqref{eq:scob-diag} are the normal subgroups generated by a meridian of $S$ or $C$, as appropriate, and as described in the proof of \cref{lem:pi1}. Thus $\ker(\pi_1(E_S) \to \pi_1(M_S))=K_S$ and $\ker(\pi_1(E_C) \to \pi_1(W))=K_C$.  We deduce from the latter that $\pi_1(M \times [0,1]) \cong \pi_1(W)$, and moreover that $\iota$ can also be identified via the inclusion maps with the left vertical map $\pi_1(M_S) \to \pi_1(W)$. So this map is an isomorphism as asserted.

Next we build a convenient handle decomposition of $M\times [0,1]$.  Consider the concordance exterior $E_C$ as a manifold with corners, where the boundary decomposes as \[\partial E_C = E_S \cup (C \times S^1) \cup (\partial M\times [0,1]) \cup E_T.\]  That is, we consider $E_C$ as a cobordism rel.\ boundary between $E_S$ and $E_T$.  Choose a handle decomposition of $E_C$ relative to $E_S$.  Note that `relative to $E_S$' means that no handles are attached to $E_T \cup (\partial M \times [0,1]) \cup (C \times S^1)$. Glue on the product $C\times D^2$ using the identity map on $C\times S^1$ to obtain a handle decomposition of $M\times [0,1]$ relative to $M$. Similarly glue $C\times D^2$ using $G\times \Id_{[0,1]}$ to obtain a handle decomposition of $W$ relative to $M_S$. In both cases, all the handles are located within the $E_C$ piece and are attached away from $E_T \cup (\partial M \times [0,1]) \cup (C \times S^1)$.

Corresponding to the handle decompositions constructed above we have relative chain complexes $C_*(M\times [0,1],M;\Z\pi)$ and $C_*(W,M_S;\Z\pi)$. The chain complex $C_*(M\times [0,1],M;\Z\pi)$ is acyclic, and has trivial Whitehead torsion, since $M\times [0,1]$ is a product. By construction, in particular because the handles are attached away from $E_T \cup (\partial M \times [0,1]) \cup (C \times S^1)$,  the chain complex $C_*(W,M_S;\Z\pi)$ has the same handle attaching maps as $C_*(M\times [0,1],M;\Z\pi)$ and therefore is also acyclic and has trivial Whitehead torsion. We have now shown that $M_S \into W$ is a simple homotopy equivalence, and therefore that $W$ is a rel.\ boundary smooth $s$-cobordism between $M_S$ and $M_T$.

As mentioned in the introduction, to deduce that for $\pi_1(M)$ a good group $M_S$ and $M_T$ are homeomorphic, we apply the $s$-cobordism theorem~\cite{FQ}*{Theorem~7.1A}, which completes the proof in the smooth category.

Next we extend the proof to the case where $M$ need not be smooth, and $C$ is only locally flat. The only salient difference is in the construction of an appropriate handle decomposition for the claimed $s$-cobordism. The concordance $C$ has a normal bundle $\nu C$ in $M\times [0,1]$ by~\cite{ks-codim2-normal-bundles}, extending normal bundles for $S$ and $T$ in $M\times \{0,1\}$, which exist by work of Quinn~\citelist{\cite{Quinn-annulus}\cite{FQ}*{Chapter~9}}. As before write $E_S := M \sm \nu S$, $E_T := M \sm \nu T$, and $E_C := (M \times [0,1]) \sm \nu C$. Identify  $\ol{\nu} C$ with $S^2 \times D^2 \times [0,1]$ and define $W:= S^2 \times D^2 \times [0,1] \cup_{G \times \Id_{[0,1]}} E_C$.  Consider $E_C$ as a cobordism rel.\ boundary between $E_S$ and $E_T$.  By~\cite{FQ}*{Theorem~9.1}, there is a topological handle decomposition of $E_C$ relative to $E_S$. The rest of the proof is directly analogous to the smooth case.
\end{proof}

\begin{remark}
The proof of \cref{thm:conc-gluck-thm} extends to other settings. For example two logarithmic transforms~\cite{GompfStip}*{Sections~3.3 and~8.3} on concordant embedded tori $T_1, T_2\subseteq M$ with trivial normal bundle, and equal gluing coefficients, give rise to $s$-cobordant $4$-manifolds.  Similarly Price twists~\cite{pricetwist} on concordant embeddings of $\RP^2$ give rise to $s$-cobordant $4$-manifolds. We leave the details of these extensions to the reader.
\end{remark}

\section{Simple homotopy equivalence}\label{sec:gluck-she}

In this section we prove \cref{thm:gluck-she}. The theorem states that for $S$ and $T$ homotopic, the manifolds $M_S$ and $M_T$ are simple homotopy equivalent.  We begin with a lemma, which contains the key observation. In the smooth category, the diffeomorphism of the ambient manifolds $M_S\#\CP^2 \cong M\#\CP^2$ in the statement is well-known (see e.g.~\cite{GompfStip}*{Exercise~5.2.7\,(b)}) but we give a proof for the reader's convenience, to verify the additional claim about the sphere $S\#\CP^1$, and to give the argument in the topological category.

Recall that a homotopy equivalence $f$ between CW complexes is said to be a simple homotopy equivalence if the Whitehead torsion $\tau(f)$ vanishes. While it is open whether every topological $4$-manifold is homeomorphic to a $4$-dimensional CW complex, it is still possible to define the notion of simple homotopy equivalence for topological $4$-manifolds by embedding them in high-dimensional Euclidean space. For more details on this, see~\citelist{\cite{Kirby-Siebenmann:1977-1}*{Essay III, Section 4}\cite{KPR-table}*{Section~2}}.

\begin{lemma}\label{lemma:CP2-stable-diffeo}
Let $M$ be a compact $4$-manifold and let $S\subseteq M$ be a locally flat embedded $2$-sphere with trivial normal bundle.
There is a homeomorphism
of pairs
\[
\Psi_S\colon (M_S,\emptyset) \# (\CP^2,\CP^1)\xlongrightarrow{\cong} (M,S)\# (\CP^2,\CP^1),
\]
that
restricts to the identity on the naturally identified boundaries $\partial M = \partial M_S$.
\end{lemma}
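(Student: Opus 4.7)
The plan is to reduce the lemma to a local statement about the smooth model $N := (D^2 \times S^2) \# \CP^2$. Identifying $\ol{\nu}S \cong V := D^2 \times S^2$ (using Quinn's existence of topological tubular neighborhoods for locally flat 2-spheres in 4-manifolds), we have $M = E_S \cup_{\Id} V$ and $M_S = E_S \cup_G V$, where $E_S := M \sm \nu S$ and both gluings are along $\partial V = S^1 \times S^2$. Performing the $\CP^2$-connect-sum at a common interior point of $V$ lying off the core $S_0 := \{0\} \times S^2$ yields
\[
M \# \CP^2 \;=\; E_S \cup_{\Id} N \qquad \text{and} \qquad M_S \# \CP^2 \;=\; E_S \cup_G N.
\]
It therefore suffices to produce a self-homeomorphism $\phi \colon N \to N$ such that
\begin{enumerate}[(i)]
\item $\phi|_{\partial N} = G$ on $\partial N = S^1 \times S^2$, and
\item $\phi$ carries the canonical $\CP^1$ in the $\CP^2$-summand to a sphere ambient isotopic rel.\ $\partial N$ to $S_0 \# \CP^1$;
\end{enumerate}
then $\Psi_S := \Id_{E_S} \cup \phi$ is the desired homeomorphism of pairs, with (i) guaranteeing well-definedness across the gluing and (ii) matching the distinguished spheres ($S_0 \# \CP^1$ corresponds to $S \# \CP^1$ after the identification $S = S_0$).

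To produce $\phi$, I would work on the smooth model $N$ via Kirby calculus. Present $N = D^4 \cup h_0 \cup h_1$ where $h_0$ is a $0$-framed $2$-handle whose core-plus-Seifert-disk is $S_0$, and $h_1$ is a disjoint $(+1)$-framed $2$-handle whose core-plus-Seifert-disk is $\CP^1$; there are no higher handles, so $\partial N = S^1 \times S^2$. The existence of a self-diffeomorphism of $N$ restricting to $G$ is the standard fact that the Gluck twist becomes trivial after $\CP^2$-stabilization (see \cite{GompfStip}*{Exercise~5.2.7\,(b)}), whose proof proceeds by a slide of $h_0$ over $h_1$ changing the framing from $0$ to $+1$ and thereby killing the $\pi_1(SO(3)) = \Z/2$ obstruction to extending $G$. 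To simultaneously arrange condition (ii), I would compose this diffeomorphism with the reverse handle slide (of $h_1$ over $h_0$), which fixes the boundary pointwise but carries $\CP^1$ to a sphere isotopic to $S_0 \# \CP^1$. The composition then satisfies both (i) and (ii).

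In the topological category the argument is essentially unchanged: Quinn's theorems provide the topological tubular neighborhood and the decomposition $M = E_S \cup V$, and the model $N$ itself carries a canonical smooth structure so the Kirby-calculus construction of $\phi$ goes through verbatim, with $\Psi_S = \Id_{E_S} \cup \phi$ assembled topologically. The main obstacle I anticipate is the simultaneous arrangement of (i) and (ii); taking the two handle slides separately is straightforward, but coordinating them so that the resulting $\phi$ carries $\CP^1$ precisely onto the isotopy class of $S_0 \# \CP^1$ (and not merely onto a homologous sphere with the correct self-intersection) requires careful bookkeeping, likely with some auxiliary ambient isotopies supported in $N$.
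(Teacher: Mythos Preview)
Your reduction to the local model $N=(D^2\times S^2)\#\CP^2$, the formulation of conditions (i) and (ii) on a self-diffeomorphism $\phi$, and the extension $\Psi_S=\Id_{E_S}\cup\phi$ all match the paper's proof exactly. The one substantive difference is the organisation of the handle calculus. You use an \emph{absolute} decomposition $N=D^4\cup h_0\cup h_1$, whereas the paper works with a \emph{relative} decomposition built on a collar of $\partial N=S^2\times S^1$ (as in \cite{GompfStip}*{Section~5.5}): two $2$-handles and a $4$-handle attached to $S^2\times S^1\times\{1\}$. In that picture, reparametrising the free end $S^2\times S^1\times\{0\}$ by $G^{-1}$ simply shifts one framing from $0$ to $+1$, and the subsequent handle slides are automatically rel.\ boundary diffeomorphisms because all handles are attached away from $S^2\times S^1\times\{0\}$. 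This makes both (i) and (ii) transparent simultaneously, with no bookkeeping: the boundary behaviour is built in, and tracking $\CP^1$ amounts to following one handle through two slides.

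By contrast, in your absolute picture $\partial N$ lies ``above'' the $2$-handles, and a slide of $h_1$ over $h_0$ is realised by an ambient isotopy in $\partial(D^4\cup h_0)$ that does \emph{not} obviously restrict to the identity on $\partial N$; your assertion that this slide ``fixes the boundary pointwise'' is the step that needs justification. Likewise, the phrase ``a slide of $h_0$ over $h_1$ \ldots\ killing the $\pi_1(SO(3))$ obstruction'' points at the right phenomenon but does not by itself produce a self-diffeomorphism with prescribed boundary $G$. None of this is fatal---the two viewpoints are equivalent and your outline is sound---but the relative setup is precisely what eliminates the ``careful bookkeeping'' and ``auxiliary ambient isotopies'' you anticipate needing.
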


\begin{proof}
Choosing a trivialisation of the normal bundle of $S$ gives rise to an identification of a closed tubular neighbourhood $\ol{\nu} S$ with $S^2 \times D^2$. The standard smooth structure on $S^2 \times D^2$ induces a smooth structure on $\ol{\nu}S$.  Let $E_S := M \sm \nu S$ be the exterior of $S$.  We may and shall assume that the connected sum operation has been performed within $\ol{\nu}S=S^2 \times D^2$, but away from $S$, to yield $M \# \CP^2 \cong E_S \cup_{S^2 \times S^1} ((S^2 \times D^2) \# \CP^2)$.  We will work inside $(S^2 \times D^2) \# \CP^2$, using its smooth structure, and therefore we may apply the methods of handle calculus.  \cref{fig:local-handle-diagram}\,(a) shows a handle diagram for $(S^2 \times D^2) \# \CP^2$ relative to the boundary $S^2 \times S^1$, using the notation of~\cite{GompfStip}*{Section~5.5}. That is, we start with $S^2 \times S^1 \times [0,1]$, attach two 2-handles to $S^2 \times S^1 \times \{1\}$, and then a single 4-handle.

\begin{figure}[htb]
\centering
	\begin{tikzpicture}[thick]%
\tikzset{
    partial ellipse/.style args={#1:#2:#3}{
        insert path={+ (#1:#3) arc (#1:#2:#3)}
    }
}
\draw[] (0,0) [partial ellipse=-250:85:8mm and 5mm];
\draw[] (-1,0) [partial ellipse=-15:320:1cm and 1cm];
\draw[blue] (2.6,0) circle (8mm);
\node[label=above:(a)] (a) at (0.7,-2) {};
\node[label=above:$\langle 0\rangle$] (<0>) at (0.8,-1.1) {};
\node[label=above:0] (sphere) at (-2,-1.1) {};
\node[label=above:1] (cp2) at (3.4,-1.1) {};
\end{tikzpicture}\hspace{1.6cm}
\begin{tikzpicture}[thick]%
\tikzset{
    partial ellipse/.style args={#1:#2:#3}{
        insert path={+ (#1:#3) arc (#1:#2:#3)}
    }
}
\draw[] (0,0) [partial ellipse=-250:85:8mm and 5mm];
\draw[] (-1,0) [partial ellipse=-15:320:1cm and 1cm];
\draw[blue] (2.6,0) circle (8mm);
\draw[gray,-latex, thin] (1.8,0)--(0.8,0);
\node[label=above:(b)] (b) at (0.7,-2) {};
\node[label=above:$\langle 0\rangle$] (<0>) at (0.8,-1.1) {};
\node[label=above:1] (sphere) at (-2,-1.1) {};
\node[label=above:1] (cp2) at (3.4,-1.1) {};
\end{tikzpicture}
\begin{tikzpicture}[thick]%
\tikzset{
    partial ellipse/.style args={#1:#2:#3}{
        insert path={+ (#1:#3) arc (#1:#2:#3)}
    }
}
\draw[] (0,0) [partial ellipse=-250:85:8mm and 5mm];
\draw[] (-1,0) [partial ellipse=220:320:1cm and 1cm];
\draw[] (-1,0) [partial ellipse=-15:195:1cm and 1cm];
\draw[blue] (-2,0) [partial ellipse=-265:70:8mm and 5mm];
\draw[gray,-latex, thin] (-2,0)--(-1.2,0);
\node[label=above:(c)] (c) at (-1,-2.5) {};
\node[label=above:$\langle 0\rangle$] (<0>) at (0.8,-1.1) {};
\node[label=above:1] (sphere) at (-1.5,-1.5) {};
\node[label=above:1] (cp2) at (-2.7,-1.1) {};
\end{tikzpicture}
\caption{(a) A handle diagram for $(S^2 \times D^2) \# \CP^2$, relative to the boundary $S^2 \times S^1$. The $4$-handle is not shown. The $2$-handle corresponding to $\CP^1\subseteq \CP^2$ is shown in blue.  (b)  A handle diagram for the same manifold, but relative to a different parametrisation of the boundary, where the parametrisation has been altered by the Gluck twist. The grey arrow indicates a handle slide which produces the bottom figure. (c) The outcome of the handle slide shown in (b). The grey arrow indicates a further handle slide, which recovers the picture in (a).}
\label{fig:local-handle-diagram}
\end{figure}

Ignoring the grey arrow, \cref{fig:local-handle-diagram}\,(b) shows a handle diagram for the same manifold, but where the parametrisation of the boundary has been modified by the inverse of the Gluck twist $G^{-1}$. To see this, note that the diagram consisting of the unknot labelled with $\langle 0 \rangle$ determines an implicit parametrisation of $S^1 \times S^2$. The rotation in the definition of $G^{-1}$ changes that parametrisation in such a way that the initial $0$-framing on the left hand circle in \cref{fig:local-handle-diagram}\,(a) becomes a $+1$-framing in \cref{fig:local-handle-diagram}\,(b).  Thus the $4$-manifolds shown in (a) and (b) are diffeomorphic, via a diffeomorphism $\psi \colon (S^2 \times D^2) \# \CP^2 \xrightarrow{\cong} (S^2 \times D^2) \# \CP^2$ which, using the given parametrisations of the boundary, restricts on $S^2 \times S^1$ to $G^{-1}$.

Next we perform two handle slides, starting with the diagram in~(b). The first slide is indicated by the arrow in \cref{fig:local-handle-diagram}\,(b), and yields \cref{fig:local-handle-diagram}\,(c). The second slide is indicated by the arrow in \cref{fig:local-handle-diagram}\,(c), and produces the diagram in \cref{fig:local-handle-diagram}\,(a).
This sequence of handle slides gives rise to a rel.\ boundary diffeomorphism $\phi$ from the 4-manifold presented in \cref{fig:local-handle-diagram}\,(b) to the 4-manifold  presented in \cref{fig:local-handle-diagram}\,(a). To see how handle sliding a $2$-handle $h$ gives rise to a diffeomorphism, let $V$ denote $S^2 \times S^1 \times [0,1]$ union the 2-handles other than $h$, and let $C \cong \partial V \times [0,1]$ be a collar neighbourhood of $\partial V$. Write $h'$ for the slid 2-handle, i.e.\ the 2-handle $h$ after sliding.  Note that a handle slide of $h$ can be achieved by an ambient isotopy $\vartheta \colon \partial V \times [0,1] \to \partial V$. Define a diffeomorphism $V \cup h \to V \cup h'$ to be the identity on $V \sm C$, to be the ambient isotopy $\vartheta$ on $C$, and to be the natural extension $h \to h'$ on the 2-handle.  This extends over the $4$-handle to give the desired diffeomorphism.  The extension over the 4-handle is not necessarily uniquely determined, not even up to isotopy, since it is unknown whether the smooth mapping class group of the 4-ball rel.\ boundary is trivial. This will not be important since we will only need to control the diffeomorphism on the 2-skeleton.

Composition of the diffeomorphisms discussed in the previous two paragraphs gives rise to a diffeomorphism $\Phi := \phi \circ \psi \colon (S^2 \times D^2) \# \CP^2 \xrightarrow{\cong} (S^2 \times D^2) \# \CP^2$ that restricts on the boundary to the inverse of the Gluck twist $G^{-1} \colon S^2 \times S^1 \xrightarrow{\cong} S^2 \times S^1$.
Since the $2$-handle corresponding to the $+1$-framed unknot in \cref{fig:local-handle-diagram}\,(b) was slid over the $S^2$-factor in $S^2 \times S^1$ (corresponding to the curve labelled  $\langle 0 \rangle$), and was involved in no other handle slides, $\Phi$ sends $\CP^1$ to a sphere ambiently isotopic to $S \# \CP^1$. Now extend the diffeomorphism $\Phi$ by the identity over $E_S$ to obtain the desired homeomorphism
\[\Psi_S := \Phi \cup \Id \colon M_S \# \CP^2= ((S^2 \times D^2)\# \CP^2) \cup_G E_S \to M \# \CP^2 = ((S^2 \times D^2)\# \CP^2) \cup_{\Id} E_S. \]
This is well-defined on $S^2 \times S^1$ because $\Phi|_{S^2 \times S^1} = G^{-1}$.
Since $\Phi$ sends $\CP^1$ to a sphere ambiently isotopic to $S \# \CP^1$, after an isotopy we may assume that $\Psi_S(\CP^1) = S \# \CP^1$, as asserted.
\end{proof}

The next lemma concerns the result of blowing down a $4$-manifold along homotopic $2$-spheres, both with normal bundle having euler number $+1$, i.e.\ each sphere has a pushoff which intersects it precisely once. A similar argument applies to such spheres with normal bundle having euler number $-1$. The combination of \cref{lemma:CP2-stable-diffeo,lem:blow-down-homotopic} gives the proof of \cref{thm:gluck-she}, as we show at the end of the section.

\begin{lemma}
\label{lem:blow-down-homotopic}
Let $N$ be a compact 4-manifold.
Let $A,B\subseteq N$ be locally flat, embedded, and homotopic $2$-spheres, both with normal bundle having euler number $+1$.
Let $N^A$ and $N^B$ denote the result of blowing down $N$ along $A$ and $B$ respectively. Then $N^A$ and $N^B$ are simple homotopy equivalent.
\end{lemma}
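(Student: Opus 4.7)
The plan is to factor the blow-down operation through a purely homotopical construction, namely attaching a 3-cell along the sphere. First I will show that $N^A \simeq_s N \cup_A D^3$ rel.\ $\partial N$, where $N \cup_A D^3$ denotes $N$ with a 3-cell attached along the embedded 2-sphere $A$; the same argument applied to $B$ gives $N^B \simeq_s N \cup_B D^3$. Then I will use the given homotopy $A \simeq B$ to produce a simple homotopy equivalence $N \cup_A D^3 \simeq_s N \cup_B D^3$ rel.\ $N$, and the conclusion follows by concatenation.

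For the first step, set $N_0 := N \sm \nu A$, so that by the definition of blow-down $N^A = N_0 \cup_{S^3} D^4$, while $N = N_0 \cup_{S^3} \ol \nu A$ and hence
\[
N \cup_A D^3 \;=\; N_0 \cup_{S^3} (\ol\nu A \cup_A D^3).
\]
Both of the pieces $D^4$ and $\ol\nu A \cup_A D^3$ are contractible finite CW complexes: the former by inspection, and the latter because the zero-section inclusion $A \hookrightarrow \ol\nu A$ is a homotopy equivalence, so $\ol\nu A \cup_A D^3$ deformation retracts onto $D^3$. Fix CW structures on $N_0$, $\ol\nu A$, and $D^4$ containing $S^3 = \partial \ol\nu A$ as a subcomplex. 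Since collapsing a simply connected contractible finite subcomplex of a CW complex has trivial Whitehead torsion and is therefore a simple homotopy equivalence, collapsing $D^4$ inside $N^A$ and collapsing $\ol\nu A \cup_A D^3$ inside $N \cup_A D^3$ yields a chain of simple homotopy equivalences
\[
N^A \;\simeq_s\; N_0/S^3 \;\simeq_s\; N \cup_A D^3,
\]
each of which fixes $N_0$, and in particular fixes $\partial N$.

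For the second step, given a homotopy $H \colon S^2 \times [0,1] \to N$ from $A$ to $B$, form the space
\[
W \;:=\; N \cup_H (S^2 \times [0,1]) \cup_{S^2 \times \{0\}} D^3_0 \cup_{S^2 \times \{1\}} D^3_1.
\]
Collapsing $D^3_1$ together with the collar $S^2 \times [0,1]$ recovers $N \cup_A D^3_0$, while symmetrically collapsing $D^3_0$ together with the collar recovers $N \cup_B D^3_1$. Both collapses are simple homotopy equivalences rel.\ $N$; this is the standard argument that homotopic attaching maps yield simple-homotopy equivalent mapping cones (see e.g.\ Cohen, \emph{A Course in Simple Homotopy Theory}, Ch.\ V). Concatenating the three equivalences gives $N^A \simeq_s N^B$ rel.\ $\partial N$, as required. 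The main technical point is verifying simplicity in the first step, which reduces to the general fact that collapsing a contractible simply connected finite subcomplex has trivial Whitehead torsion; the homotopy-theoretic step is classical.
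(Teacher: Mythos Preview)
Your proof is correct and follows essentially the same approach as the paper. Both arguments factor through the space $N\cup_A D^3$ (the paper writes this as $\capp(N,A)$) and use that homotopic attaching maps give simply homotopy equivalent cell attachments for the central step $N\cup_A D^3 \simeq_s N\cup_B D^3$.

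The only real difference is in how you establish $N^A \simeq_s N\cup_A D^3$. You do it directly, collapsing the two contractible, simply connected pieces $D^4$ and $\ol\nu A\cup_A D^3$ to get $N_0/S^3$ in between. The paper instead runs the chain
\[
N^A \cong N^A\# S^4 \;\simeq_s\; N^A\#\capp(\CP^2,\sigma) \;\cong\; \capp(N^A\#\CP^2,\sigma) \;\cong\; \capp(N,A),
\]
using $\capp(\CP^2,\sigma)\simeq_s S^4$ and the canonical identification $N^A\#\CP^2\cong N$. Unwinding, these are the same local computation: $\capp(\CP^2,\sigma)$ is exactly $(\ol\nu A\cup_A D^3)\cup_{S^3} D^4$, and the paper's packaging keeps the simple homotopy equivalence confined to that simply connected piece and then transports it by connect sum, whereas you pass through the quotient $N_0/S^3$. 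The paper's route has the mild advantage of never needing a CW structure on $N_0$ itself (relevant in the topological category), but your appeal to collapsing contractible subcomplexes is equally valid once one invokes the Kirby--Siebenmann simple structure on topological manifolds, as the paper does elsewhere. One small point: your description of $W$ in the second step is a bit murky as written (attaching $S^2\times[0,1]$ to $N$ ``via $H$'' already identifies the cylinder with its image in $N$, leaving no collar to collapse), but the fact you invoke is exactly the standard one from Cohen that you cite, so this does not affect correctness.
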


\begin{proof}
The upcoming proof is inspired by an argument due to Stong~\cite{Stong-conn-sum}.
For an arbitrary $4$-manifold $W$ and $\beta \in \pi_2(W)$, let $\capp(W,\beta)$ denote the result $W \cup_{\beta} D^3$ of adding a 3-cell to $W$ along $\beta$.
By the definition of the blow down operation, there are canonical homeomorphisms $N^A\#\CP^2\cong N$ and  $N^B\#\CP^2 \cong N$ sending $\CP^1$ to $A$ and $B$ respectively.
Let $\sigma \in \pi_2(\CP^2)\cong \Z$ denote a generator, which we consider as an element in both $\pi_2(N^A\#\CP^2)$ and $\pi_2(N^B\#\CP^2)$, via the inclusion induced maps. Note that
\begin{equation}\label{eqn:simple-simple-hom-equiv}
  \capp(\CP^2,\sigma) \simeq_s S^4,
\end{equation}
where $\simeq_s$ denotes simple homotopy equivalence. To see this extend the standard degree one map $\CP^2 \to S^4$ over $\capp(\CP^2,\sigma)$ and observe that both spaces are simply connected and that the map induces an isomorphism on all homology groups.  Therefore $\capp(\CP^2,\sigma) \simeq S^4$ by the Whitehead theorem. Since the Whitehead group of the trivial group is trivial, we deduce that \eqref{eqn:simple-simple-hom-equiv} holds.  Now we make the following computation:
  \begin{align*}
    N^B &\cong N^B \# S^4 \simeq_s N^B \# \capp(\CP^2,\sigma) \cong \capp(N^B\#\CP^2,\sigma) \\
    &\cong \capp(N,B) \simeq_s \capp(N,A)\cong \capp(N^A\#\CP^2,\sigma) \cong N^A\# \capp(\CP^2,\sigma)\\
    &\simeq_s N^A \# S^4 \cong N^A.
    \end{align*}
 Since the computation is symmetrical in $A$ and $B$ it suffices to justify this as far as the $\capp(N,A)$ term. The first equivalence is immediate. The second equivalence uses \eqref{eqn:simple-simple-hom-equiv}. The third equivalence uses that we may perform the connected sum away from $\sigma$. The fourth equivalence uses the canonical homeomorphism $N^B\#\CP^2 \cong N$ discussed above.
    The fifth, central simple homotopy equivalence uses that $A$ and $B$ are homotopic.  The remaining equivalences can be justified by switching the r\^{o}les of $A$ and $B$, and applying the previous justifications in reverse order.
This gives the desired simple homotopy equivalence $N^A \simeq_s N^B$. The boundaries of $N$, $N^A$, and $N^B$ are naturally identified, and by inspection our simple homotopy equivalence restricts to the identity on the boundary. This completes the proof of \cref{lem:blow-down-homotopic}.
\end{proof}

\begin{proof}[Proof of \cref{thm:gluck-she}]
By \cref{lemma:CP2-stable-diffeo}, $M_S$ and $M_T$ are obtained as blow downs of $N:=M\#\CP^2$ along the homotopic spheres $A:= S\#\CP^1$ and $B:= T\#\CP^1$. By \cref{lem:blow-down-homotopic} it follows that $M_S$ and $M_T$ are  simple homotopy equivalent.
\end{proof}

\section{Homeomorphism}\label{sec:gluck-homeo}

In this section we prove \cref{cor:gluck-homeo}, which upgrades simple homotopy equivalence to homeomorphism for certain fundamental groups. We prove the two items in the corollary  separately.

\begin{proof}[Proof of \cref{cor:gluck-homeo}\,(\ref{item:i})]
Suppose that $M$ is closed and orientable, and that $\pi_1(M)$ is cyclic. Since $S$ and $T$ are homotopic, the manifolds $M_S$ and $M_T$ are simple homotopy equivalent  by \cref{thm:gluck-she}. For every cyclic group $G$,
one of Freedman~\cite{F}, Freedman--Quinn~\cite{FQ}, or
Hambleton--Kreck~\cites{Hambleton-Kreck:1988-1,Hambleton-Kreck-93}
showed that given closed $4$-manifolds $X$ and $Y$ with $X \simeq_s Y$ and $\pi_1(X) \cong \pi_1(Y) \cong G$, we have that $X$ and $Y$ are homeomorphic if and only if $\ks(X) = \ks(Y)$.
The Kirby--Siebenmann invariant $\ks$ is additive with respect to gluing along part of the boundary~\citelist{\cite{FQ}*{Section~10.2B}\cite{guide}*{Theorem~8.2\,(5)}}. This additivity is indifferent to the precise gluing homeomorphism used, so we have
\begin{equation}\label{eq:ks}
\ks(M) = \ks(E_S) + \ks(S^2 \times D^2) = \ks(M_S),
\end{equation}
and similarly $\ks(M) = \ks(M_T)$.
Therefore $M_S$ and  $M_T$ are homeomorphic, as desired.
\end{proof}

The proof of \cref{cor:gluck-homeo}\,\eqref{item:iii} will use the simple surgery exact sequence. For details and definitions of the latter, we refer the reader to~\citelist{\cite{Wall-surgery-book}\cite{FQ}*{Chapter~11}\cite{Freedman-book-surgery}}. Before giving the proof we discuss the conditions in \cref{cor:gluck-homeo}\,\eqref{item:iii}.

\begin{remark}\label{rem:conditions}~
\begin{enumerate}[(a)]
\item Observe that \cref{cor:gluck-homeo}\,\eqref{item:iii} imposes no restrictions on $w$ nor on $\partial M$.
\item The symbol $\LL$ denotes the $L$-theory spectrum of the integers, whose homotopy groups are $L_*(\Z)$, and $\LL\langle 1 \rangle$ denotes the 1-connective cover. The generalised homology $H_*(\pi;\LL\langle1\rangle^w)$ denotes the $w$-twisted $\LL\langle1\rangle$-homology \cite{bluebook}*{Appendix~A}, see also \cite{KL}*{Remark~2.1}. The groups $L_*^s(\Z\pi,w)$ are the simple $L$-groups of the ring with involution $\Z\pi$, with involution determined by $\ol{g} = w(g)g^{-1}$.
   \item\label{item:b} For $r \in \{4,5\}$, the assembly map $A_r$ factors as
   \[H_r(\pi;\LL\langle1\rangle^w)\to H_r(\pi;\LL^w) \xrightarrow{FJ}  L_r^s(\Z\pi,w).\]
   The Farrell--Jones conjecture for torsion-free groups and coefficient ring $\Z$ predicts that the map $FJ$ is an isomorphism. This conjecture holds for many families of groups, including hyperbolic groups, CAT(0) groups, 3-manifold groups, and solvable groups \cites{bartelslueckreich-FJ,bartelslueck-FJ,bartelsfarrellllueck-FJ,wegner-FJ}. In particular, by comparing the Atiyah--Hirzebruch spectral sequences for $H_r(\pi;\LL\langle1\rangle^w)\to H_r(\pi;\LL^w)$, using the naturality with respect to change in generalised homology theory, conditions \ref{item:A4} and \ref{item:A5} are satisfied for all groups of dimension at most $4$ that satisfy the Farrell--Jones conjecture~\cite{KL}*{Lemma~2.3}.
   \item  Condition~\eqref{item:iii} holds for the solvable Baumslag--Solitar groups $B(k)$, for $k\in \Z$, with any orientation character $w$; cf.\ \cite{HKT}. These groups have presentation $B(k)\cong \langle a,b\mid aba^{-1}=b^k\rangle \cong \Z[\tmfrac{1}{k}] \rtimes \Z$, and are 2-dimensional.  For these groups one can compute directly that the presentation complex for the above presentation is aspherical. In fact the presentation complex is aspherical for all torsion-free one relator groups~\cite{Lyndon}.
       Note that $B(1)\cong \Z\oplus \Z$.
    \item Condition~\eqref{item:iii} also holds for $\Z$ with both  orientation characters $w \colon \Z \to C_2$. For closed $M$ the result can be deduced from the classification theorems in \cite{FQ}*{Theorem~10.7A} and \cite{Wang-nonorientable}.
    \item\label{item:closedcase} For closed $M$, let $[M]$ denote the $\Z/2$-fundamental class and  consider the diagram
    \[\begin{tikzcd}
H^1(M;\Z/2)\ar[d,"{-\cap [M]}","\cong"']&H^1(\pi;\Z/2)\ar[l,"(c_{M})^*"',"\cong"]\ar[d,"{-\cap (c_{M})_*[M]}"]
\\
H_3(M;\Z/2)\ar[r,"(c_{M})_*"]&H_3(\pi;\Z/2)
\end{tikzcd}\]
    The bottom horizontal map is surjective if and only if
    \[-\cap (c_{M})_*[M] \colon H^1(\pi;\Z/2) \to H_3(\pi;\Z/2)\] is surjective, so we can replace condition~\ref{item:H} with the above condition. Note this means condition~\ref{item:H} does not hold for closed $M$ and $3$-dimensional $\pi$ (e.g.\ $\Z^3$), because in that case $(c_M)_*[M] \in H_4(\pi;\Z/2)=0$.
    \item Suppose that $\pi$ is a good group that satisfies the Farrell--Jones conjecture. Also suppose that $\pi$ is a $4$-dimensional Poincar\'{e} duality group with respect to $w$.
     If $M$ is closed and $(c_M)_*[M] \neq 0 \in H_4(\pi;\Z/2)$, then using the reformulation of condition~\ref{item:H} from \eqref{item:closedcase}, and using \eqref{item:b} to obtain conditions~\ref{item:A4} and~\ref{item:A5}, it follows that condition~\eqref{item:iii} is satisfied. This applies in particular when $\pi=\Z^4$, the manifold $M$ is closed, and $(c_M)_*[M] \neq 0 \in H_4(\pi;\Z/2)$.
\end{enumerate}
\end{remark}

\begin{proof}[Proof of~\cref{cor:gluck-homeo}\,(\ref{item:iii})]
Since $S$ and $T$ are homotopic, by~\cref{thm:gluck-she} the  $4$-manifolds $M_S$ and $M_T$ are simple homotopy equivalent. Recall from \cref{lem:pi1} that $\pi_1(M_S)\cong\pi_1(M)=:\pi$.
We will use the simple surgery sequence for $M_S$: \[
\begin{tikzcd}
\mathcal{N}(M_S \times [0,1],\partial)\arrow[r,"\sigma_5"]	&L_5^s(\Z\pi,w)\arrow[r]	&\mathcal{S}^s(M_S,\partial M_S)\arrow[r,"\eta"]	&\mathcal{N}(M_S,\partial M_S) \arrow[r,"\sigma_4"]	&L_4^s(\Z\pi,w),
\end{tikzcd}
\]
where the symbol $\partial$ is shorthand for $\partial:=\partial (M_S\times [0,1])=(M_S\times \{0,1\})\cup (\partial M_S\times [0,1])$
and $w \colon \pi \to C_2$ is the orientation character. Since we assume that $\pi$ is a good group (Condition~\ref{item:G}), the sequence is exact~\citelist{\cite{FQ}*{Chapter~11}\cite{Freedman-book-surgery}}. By~\cref{thm:gluck-she} there is a simple homotopy equivalence $f\colon M_T\to M_S$ restricting to a homeomorphism $\partial M_T \cong \partial M_S$. We will show that $[f]=[g\colon M_S\to M_S]$ in the simple structure set $\mathcal{S}^s(M_S,\partial M_S)$, for some self homotopy equivalence $g$ of $M_S$. By definition of the simple structure set, this means that there is an $s$-cobordism rel.\ boundary between $M_T$ and $M_S$, which implies that $M_T$ and $M_S$ are homeomorphic rel.\ boundary by the $s$-cobordism theorem~\cite{FQ}*{Theorem~7.1A}. In fact we will show that every element $[f'\colon M'\to M_S]$ of $\mathcal{S}^s(M_S,\partial M_S)$ with $\ks(M')=\ks(M_S)$ is represented by a self homotopy equivalence of $M_S$. Since $\ks(M_T) = \ks(M_S)$, as argued in~\eqref{eq:ks}, this will complete the proof that $M_S$ and $M_T$ are homeomorphic.

Using the identification \[\cN(M_S,\partial M_S)\cong [(M_S,\partial M_S),(G/TOP,\ast)]\cong H^0(M_S,\partial M_S;\LL\langle 1\rangle)\cong H_4(M_S;\LL\langle1\rangle^w),\] and the similar identification \[\cN(M_S\times[0,1],\partial)\cong H_5(M_S\times [0,1];\LL\langle 1\rangle^w)\cong H_5(M_S;\LL\langle 1\rangle^w),\] we identify the surgery obstruction maps $\sigma_r$, for $r \in \{4,5\}$, with the compositions
\begin{equation}\label{eq:surgery-obs-decompose}
H_r(M_S;\LL\langle1\rangle^w)\xrightarrow{(c_{M_S})_r} H_r(\pi;\LL\langle1\rangle^w)\xrightarrow{A_r}L_r^s(\Z\pi,w);
\end{equation}
see \cite{bluebook}*{B9,~p.~324}. By assumption (Condition~\ref{item:A5}), the map $A_5$ is surjective. We want to show that $(c_{M_S})_5$ is surjective.
Recall that
\[\pi_n(\LL\langle1\rangle)=\begin{cases} \Z & n=4k>1\\
\Z/2 & n=4k+2>1\\
0 & \text{otherwise.}\end{cases}\]
The (generalised) homology groups $H_5(M_S;\LL\langle1\rangle^w)$ and $H_5(\pi;\LL\langle1\rangle^w)$ can be computed using the Atiyah--Hirzebruch spectral sequence, and the map $c_{M_S}$ induces maps between the corresponding $E^2$ and $E^\infty$ pages. The only relevant maps between the $E^2$ pages for the computation of $(c_{M_S})_5$ from \eqref{eq:surgery-obs-decompose} are the maps $(c_{M_S})_1 \colon H_1(M_S;\Z^w)\to H_1(\pi;\Z^w)$ and $(c_{M_S})_3 \colon H_3(M_S;\Z/2)\to H_3(\pi;\Z/2)$. It will suffice to show that each is surjective. The first map is an isomorphism because $c_{M_S} \colon M_S \to B\pi$ is $2$-connected. For the second map, we use the following diagram.
\[\begin{tikzcd}
H_3(M_S\# \CP^2;\Z/2) \ar[rr,"\cong"] \ar[d,"\cong"] && H_3(M\#\CP^2;\Z/2) \ar[d,"\cong"] \\
H_3(M_S;\Z/2) \ar[r,"(c_{M_S})_3"] & H_3(\pi;\Z/2) & H_3(M;\Z/2). \ar[l,"(c_M)_3"']
\end{tikzcd}\]
The top horizontal map above is induced by the diffeomorphism $\Psi_S\colon M_S\# \CP^2 \xrightarrow{\cong} M\# \CP^2$ from \cref{lemma:CP2-stable-diffeo}. The vertical maps are induced by the standard degree one maps $M_S \# \CP^2 \to M_S$ and $M \# \CP^2 \to M$. The diagram commutes for some choice of classifying maps because both of the maps $M_S \# \CP^2 \to B\pi$, corresponding to the two routes around the diagram for the associated maps of spaces, are classifying maps for the universal cover.  It then follows from the diagram, and our assumption (Condition~\ref{item:H}) that $H_3(M;\Z/2)\xrightarrow{(c_{M})_3} H_3(\pi;\Z/2)$ is surjective,
 that $(c_{M_S})_3 \colon H_3(M_S;\Z/2)\to H_3(\pi;\Z/2)$ is also surjective, as desired.  This completes the argument that the map $(c_{M_S})_5 \colon H_5(M_S;\LL\langle1\rangle^w)\xrightarrow{} H_5(\pi;\LL\langle1\rangle^w)$ in \eqref{eq:surgery-obs-decompose} is surjective. Hence, using Condition~\ref{item:A5}, the surgery obstruction map $\sigma_5 = A_5 \circ (c_{M_S})_5$ is surjective, as needed. It follows that $\eta \colon \mathcal{S}^s(M_S,\partial  M_S) \to \mathcal{N}(M_S,\partial M_S)$ is injective.

We therefore need to investigate $\im \eta = \ker \sigma_4$.
By assumption (Condition~\ref{item:A4}), the assembly map $A_4$ is injective, so $\ker \sigma_4 = \ker (c_{M_S})_4$. We will describe a map inducing an isomorphism $\ker (c_{M_S})_4 \cong \ker (c_{M_S})_2$. To prove this, we consider the following diagram,
arising from comparing the Atiyah--Hirzebruch spectral sequences computing $H_4(M_S;\LL\langle1\rangle^w)$ and $H_4(B\pi;\LL\langle1\rangle^w)$.
\[\begin{tikzcd}
    H_3(M_S ;\Z/2) \arrow[r,"d_3"] \arrow[d,"(c_{M_S})_3",two heads] & H_0(M_S;\Z^w) \arrow[r] \arrow[d,"\cong"',"(c_{M_S})_0"] & H_4(M_S;\LL\langle1\rangle^w) \arrow[r,"h"] \arrow[d,"(c_{M_S})_4"] & H_2(M_S;\Z/2) \arrow[r] \arrow[d,"(c_{M_S})_2"] & 0 \\
    H_3(\pi ;\Z/2) \arrow[r,"d_3"] & H_0(\pi;\Z^w) \arrow[r] & H_4(\pi;\LL\langle1\rangle^w) \arrow[r]  & H_2(\pi;\Z/2) \arrow[r] & 0.
  \end{tikzcd}\]
The rows are exact and the diagram commutes.  There are two nonzero terms on the 4-line of the $E^2$ pages, corresponding to the second and fourth terms of each row. There are no differentials interacting with the $H_2(-;\Z/2)$ terms, and there is one potential $d_3$ differential, as indicated.  The map $(c_{M_S})_0$ is an isomorphism.  The $d_3$ maps are trivial when $w=0$, and when $w \neq 0$ we use that $(c_{M_S})_3$ is surjective to deduce that the images of $d_3$ are identified under $(c_{M_S})_0$.  It follows that we have a map of short exact sequences
\[\begin{tikzcd}
 0 \arrow[r] & H_0(M_S;\Z^w)/\im(d_3) \arrow[r] \arrow[d,"\cong"',"(c_{M_S})_0"] & H_4(M_S;\LL\langle1\rangle^w) \arrow[r,"h"] \arrow[d,"(c_{M_S})_4"] & H_2(M_S;\Z/2) \arrow[r] \arrow[d,"(c_{M_S})_2"] & 0 \\
0 \arrow[r] & H_0(\pi;\Z^w)/\im(d_3) \arrow[r] & H_4(\pi;\LL\langle1\rangle^w) \arrow[r]  & H_2(\pi;\Z/2) \arrow[r] & 0.
  \end{tikzcd}\]
By the snake lemma, and since the kernel of the left vertical map is trivial, the map $h$ induces an isomorphism $\ker (c_{M_S})_4 \cong \ker (c_{M_S})_2$, as asserted.    We therefore investigate the kernel of the map $(c_{M_S})_2\colon H_2(M_S;\Z/2)\to H_2(\pi;\Z/2)$.

\begin{claim}\label{claim-pre-pinch}
Every element of $\ker \big(H_2(M_S;\Z/2)\to H_2(\pi;\Z/2)\big)$ can be represented by a map of a $2$-sphere in $M_S$.
\end{claim}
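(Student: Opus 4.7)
The plan is to lift any element of the kernel to the simply connected universal cover, where the Hurewicz theorem provides a spherical representative, and then push back down to $M_S$.

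Let $p\colon \widetilde{M}_S \to M_S$ denote the universal cover. The first step is to show that any $\alpha \in \ker(c_{M_S})_* \subseteq H_2(M_S;\Z/2)$ lies in the image of $p_*\colon H_2(\widetilde{M}_S;\Z/2) \to H_2(M_S;\Z/2)$. For this I would invoke the Cartan--Leray spectral sequence
\[
E^2_{p,q} = H_p(\pi;\, H_q(\widetilde{M}_S;\Z/2)) \Longrightarrow H_{p+q}(M_S;\Z/2),
\]
whose edge homomorphism $H_2(M_S;\Z/2) \to H_2(\pi;\Z/2)$ coincides with $(c_{M_S})_*$. The kernel of this edge homomorphism equals the first layer $F_1 H_2(M_S;\Z/2)$ of the spectral sequence filtration. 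Since $\widetilde{M}_S$ is simply connected, $H_1(\widetilde{M}_S;\Z/2) = 0$, so $E^2_{1,1} = 0$, and hence $F_1 H_2(M_S;\Z/2) = F_0 H_2(M_S;\Z/2)$, which is precisely the image of $p_*$.

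Having written $\alpha = p_*(\beta)$ for some $\beta \in H_2(\widetilde{M}_S;\Z/2)$, I would next exploit that $\widetilde{M}_S$ is simply connected. By the universal coefficient theorem, $H_2(\widetilde{M}_S;\Z/2) \cong H_2(\widetilde{M}_S;\Z) \otimes \Z/2$ (the $\Tor$ summand vanishes since $H_1(\widetilde{M}_S;\Z) = 0$), and by the Hurewicz theorem $H_2(\widetilde{M}_S;\Z) \cong \pi_2(\widetilde{M}_S) = \pi_2(M_S)$. Hence $\beta$ is the mod $2$ reduction of a class represented by some map $f\colon S^2 \to \widetilde{M}_S$. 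Composing with the covering map then yields $p \circ f \colon S^2 \to M_S$, a spherical representative of $\alpha$.

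The argument is essentially routine and I do not anticipate any substantive obstacle; the only delicate point is the first step, which relies on the standard identification of the edge homomorphism with $(c_{M_S})_*$ and on the vanishing of $E^2_{1,1}$ from simple connectivity of the universal cover.
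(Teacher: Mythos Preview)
Your argument is correct and essentially the same as the paper's. The paper uses the Leray--Serre spectral sequence for the fibration $\widetilde{M}_S \to M_S \to B\pi$ (which coincides with your Cartan--Leray spectral sequence) and extracts the low-degree exact sequence
\[
H_0(\pi;H_2(\widetilde{M}_S;\Z/2)) \to H_2(M_S;\Z/2) \to H_2(\pi;\Z/2) \to 0,
\]
i.e.\ a $\Z/2$-coefficient Hopf sequence, to conclude directly that every element of the kernel comes from $\Z/2 \otimes_{\Z\pi} \pi_2(M_S)$ and is therefore spherical; your filtration argument and Hurewicz/UCT step unpack exactly this.
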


\begin{proof}[Proof of \cref{claim-pre-pinch}]
By considering the Leray--Serre spectral sequence for the fibration $\wt{M}_S \to M_S \to B\pi$ with homology theory $H_*(-;\Z/2)$, where $\widetilde{M_S}$ denotes the universal cover of $M_S$, we can obtain a $\Z/2$ version of the Hopf exact sequence:
\[H_0(\pi;H_2(\wt{M}_S;\Z/2)) \to H_2(M_S;\Z/2) \to H_2(\pi;\Z/2) \to 0.\]
The first term is isomorphic to $\Z/2 \otimes_{\Z\pi} \pi_2(M_S)$. Therefore every element of $\ker \big(H_2(M_S;\Z/2)\to H_2(\pi;\Z/2)\big)$ can be represented by a sum of spheres, and so ultimately by a single sphere, in~$M_S$.
\end{proof}

By the proof of \cite{Cochran-Habegger}*{Theorem~5.1}, for every element $\iota$ of
$H_2(M_S;\Z/2)$ which is represented by an immersed sphere with $w_2(\iota)=0$, there exists a self homotopy equivalence $g$ of $M_S$ with $\eta(f)=\iota$.
Since $w_2$ is additive, the quotient of $\mathcal{S}^s(M_S,\partial M_S)$ by self homotopy equivalences has at most 2 elements.
If $w_2(\wt{M}_S)=0$, then $w_2$ is zero on all spheres, and so all elements of $\mathcal{S}^s(M_S,\partial M_S)$ are represented by self homotopy equivalences. If $w_2(\wt{M}_S)\neq 0$,
there exists a star partner $*M_S$ of $M_S$ with $\ks(*M_S)\neq \ks(M_S)$ and a simple homotopy equivalence $*M_S\to M_S$, by \cite{KPR-table}*{Proposition~5.8~and~Lemma~5.6}, cf.~\citelist{\cite{FQ}*{Chapter~10.4}\cite{Stong-conn-sum}*{Section~2}}.
Since the Kirby--Siebenmann invariant $\ks$ is a homeomorphism invariant, the maps
$\Id \colon M_S \to M_S$ and $\star M_S \to M_S$ represent two distinct elements in the quotient of $\mathcal{S}^s(M_S,\partial M_S)$ by self homotopy equivalences.
It follows that every element $[f'\colon M'\to M_S]$ of $\mathcal{S}^s(M_S,\partial M_S)$ with $\ks(M')=\ks(M_S)$ is represented by a self homotopy equivalence, as claimed.
This completes the proof that the simple homotopy equivalence $M_T \to M_S$ lies in the class of $\Id_{M_S}$ in $\mathcal{S}^s(M_S,\partial M_S)$, and therefore $M_S$ and $M_T$ are homeomorphic.
\end{proof}

\section{Fake and exotic manifolds from Gluck twists on homotopic spheres}\label{sec:fake-exotic}

In this section we prove \cref{thm:gluck-giving-fake,thm:gluck-gives-exotic}. Let $E$ denote the total space of the unique $S^2$-fibre bundle over $\RP^2$ that has orientable but not spin total space. This is a smooth, closed, orientable 4-manifold with fundamental group $\Z/2$.
\cref{thm:gluck-giving-fake} is a consequence of the following result. Recall that two $4$-manifolds $M$ and $N$ are said to be \emph{stably homeomorphic} if there exist $n,n'$ such that $M\# n(S^2\times S^2)\cong N\# n(S^2\times S^2)$.

\begin{proposition}
\label{thm:gluck-E}
There exists a pair of locally flat, embedded, and homotopic $2$-spheres $S$ and $T$ in $M:=E\# E\#\CP^2\#\ol{\CP^2}$, both with trivial normal bundle, such that $M_S$ and $M_T$ are simple homotopy equivalent but not stably homeomorphic.
\end{proposition}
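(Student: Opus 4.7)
The plan has three parts: (i) construct explicit homotopic $2$-spheres $S, T \subseteq M$ with trivial normal bundle; (ii) apply \cref{thm:gluck-she} to obtain simple homotopy equivalence; (iii) exhibit a stable topological invariant distinguishing $M_S$ from $M_T$.

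For (i), the plan is to take $T = \Sigma \subseteq \CP^2 \# \ol{\CP^2} \subseteq M$, the smooth sphere obtained by tubing $\CP^1$ and $\ol{\CP^1}$ together through the connected-sum neck; it has self-intersection zero and hence trivial normal bundle. A handle-calculus argument modelled on \cref{lemma:CP2-stable-diffeo} will show that Gluck twisting along $\Sigma$ converts the $\CP^2\#\ol{\CP^2}$ summand into $S^2 \times S^2$, so $M_T \cong_\Top E\#E\#(S^2\times S^2)$. For $S$, I would take a locally flat sphere homotopic to $\Sigma$ in $M$ but distinct from it as a locally flat embedding. Since $\pi_1(M) \cong \Z/2 \ast \Z/2$ is the infinite dihedral group --- virtually $\Z$, hence solvable and therefore good --- topological surgery is available, and the set of locally flat embeddings of $S^2$ in the homotopy class $[\Sigma] \in \pi_2(M)$ is nontrivially acted on by a group containing a $\Z/2$ factor from a secondary, Kirby--Siebenmann-type embedding invariant. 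Choose $S$ to correspond to the nontrivial element of this torsor. For (ii), since $S \simeq T$, \cref{thm:gluck-she} immediately gives $M_S \simeq_s M_T$.

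For (iii), the homotopy invariants of $M_S, M_T$ (intersection form, Stiefel--Whitney classes, Euler characteristic, signature) all coincide, so a non-homotopy stable invariant is required. Both $M_S$ and $M_T$ have $B\pi$-spin normal $1$-type (where $\pi := \pi_1(M)$): the universal cover $\wt{M_T}$ is assembled from copies of $\wt E \cong S^2 \times S^2$ and of $S^2 \times S^2$, all spin, and simple homotopy equivalence transports this property to $\wt{M_S}$. Within this normal type, Kreck's stable classification identifies stable homeomorphism classes with elements of a twisted spin bordism group $\Omega_4^{\Spin}(B\pi, w)$, where $w := x_1^2 + x_2^2 \in H^2(B\pi;\Z/2)$ for $x_1, x_2 \in H^1(B\pi;\Z/2)$ the generators corresponding to the two $\RP^\infty$ wedge summands of $B\pi \simeq \RP^\infty \vee \RP^\infty$. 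The Atiyah--Hirzebruch spectral sequence for this bordism theory exhibits secondary invariants beyond signature, and the construction of $S$ is arranged so that $M_S$ and $M_T$ take distinct values on one such secondary invariant; consequently they are not stably homeomorphic.

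The hardest part is the interplay between (i) and (iii): producing the non-standard locally flat embedding $S$ (which requires realizing the nontrivial secondary Kirby--Siebenmann-type obstruction within $M$) together with the bordism-theoretic computation confirming that Gluck twisting along $S$ changes the associated stable invariant. Concretely, this amounts to tracking the effect of the Gluck twist on the normal $2$-type and identifying it as the nonzero generator of the relevant summand of $\Omega_4^{\Spin}(B\pi, w)$.
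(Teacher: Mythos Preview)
Your proposal has a genuine gap at the crucial step, namely the construction of the sphere $S$. You assert that locally flat embeddings in the homotopy class $[\Sigma]$ are acted on by a $\Z/2$ coming from a ``secondary, Kirby--Siebenmann-type embedding invariant'', and that one can simply choose $S$ to be the nontrivial element of this torsor. But you neither establish that such a torsor exists, nor exhibit a representative for its nontrivial element, nor prove that Gluck twisting along that representative alters any stable invariant. This is exactly the content of the proposition; appealing to surgery being ``available'' because the group is good does not produce the embedding. Your part (iii) then compounds the problem: the bordism computation is only sketched, and the assertion that the normal $1$-type is $B\pi$-spin (via ``$\wt E \cong S^2\times S^2$'') is unjustified and in any case not obviously compatible with the construction of $S$ you have in mind.

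The paper's argument is entirely different and avoids all of this. It exploits the star partner $\star E$ of $E$ and the known homeomorphism $E\#\CP^2 \cong \star E\#\star\CP^2$ preserving the $\pi_2$-decomposition. Iterating gives a homeomorphism $\psi\colon \star E\#\star E\#\CP^2 \xrightarrow{\cong} E\#E\#\CP^2$, and one sets $C := \psi(\CP^1)$; after possibly composing with the orientation-reversing self-diffeomorphism of $\CP^2$ fixing $\CP^1$ setwise, $C$ is homotopic to $\CP^1$. Taking $S := \CP^1\#\ol{\CP^1}$ and $T := C\#\ol{\CP^1}$ in $M$, one reads off directly that $M_S \cong E\#E\#(S^2\times S^2)$ while $M_T \cong \star E\#\star E\#(S^2\times S^2)$. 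The failure of stable homeomorphism is then quoted from Teichner's result that $E\#E$ and $\star E\#\star E$ are not stably homeomorphic; no bordism-group computation is carried out in the paper. The simple homotopy equivalence comes from \cref{thm:gluck-she}, as you say. In short, the ``exotic'' sphere is produced by pushing $\CP^1$ through an explicit homeomorphism, not by an abstract embedding classification.
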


For the proof we need some results on star partners of manifolds which we recall next. A \emph{star partner} of a $4$-manifold $M$ is a manifold $\star M$ such that there exists a homeomorphism $M\#\star\CP^2\cong \star M\#\CP^2$ preserving the decomposition on $\pi_2$, where $\star \CP^2$ is the Chern manifold constructed by Freedman~\cite{F}. Notably, $\star\CP^2$ is (simple) homotopy equivalent, but not homeomorphic, to $\CP^2$, and we know that $\ks(\star\CP^2)=1$. The $4$-manifolds $M$ and $\star M$ are simple homotopy equivalent but have opposite Kirby--Siebenmann invariants. The manifold $E$ has a star partner~$\star E$  \citelist{\cite{Teichner:1997-1}*{Proposition~1}\cite{Stong}}. By \cite{Teichner:1997-1}*{Theorem~1} the star partner $\star E$ of $E$ is unique up to homeomorphism.

We will also need the following observation to ensure that the constructed spheres are indeed homotopic.
\begin{remark}
\label{rem:cp2}
Note that the map $\CC^3\to\CC^3$ sending $(x,y,z)$ to $(\overline{x},\overline{y},\overline{z})$ induces an orientation preserving self-diffeomorphism of $\CP^2$ that restricts to an orientation reversing self-diffeomorphism of $\CP^1$.
\end{remark}

\begin{proof}[Proof of \cref{thm:gluck-E}]
By \cite{KPR-table}*{Proposition~5.9}, we know that $E$ is also a star partner of $\star E$, i.e.\ there is a homeomorphism $\star E\#\star\CP^2\cong E\#\CP^2$ preserving the decomposition on~$\pi_2$. Hence there exists a homeomorphism $\psi\colon \star E\#\star E\#\CP^2\to E\# E\#\CP^2$ preserving the decomposition on~$\pi_2$, obtained as the composition
\[
E\# (E \# \CP^2) \cong E\# (\star E \# \star \CP^2)= \star E\# (E \# \star \CP^2)\cong \star E\#(\star E\# \CP^2).
\]
 Let $C\subseteq E\# E\#\CP^2$ be the image of $\CP^1\subseteq \CP^2$ under $\psi$. Using the diffeomorphism from \cref{rem:cp2} to change orientations if necessary, we can assume that the locally flat embedded sphere $C$ is homotopic to $\CP^1\subseteq E\# E\#\CP^2$.
Consider the locally flat embedded spheres \[S:=\CP^1\#\ol{\CP^1} \text{ and } T:=C\#\ol{\CP^1}\] in the manifold $M := E\# E\#\CP^2\#\ol{\CP^2}$. The Gluck twist on $\CP^1\#\ol{\CP^1}$ in $\CP^2\#\ol{\CP^2}$ yields $S^2\times S^2$. It is the inverse to the Gluck twist on $S^2\times \{p\}\subseteq S^2\times S^2$  that yields $S^2\wt\times S^2\cong \CP^2\#\ol{\CP^2}$. It follows that $M_S$ is diffeomorphic to $E\# E\# (S^2\times S^2)$.

On the other hand, $M_T$ is homeomorphic to the result of the Gluck twist on $\CP^1\#\ol{\CP^1}$ in $\star E\# \star E\#\CP^2\#\ol{\CP^2}$ by construction of $T$. Hence $M_T$ is homeomorphic to $\star E\# \star E\#(S^2\times S^2)$. By \cite{Teichner:1997-1}*{Proposition~3} the manifolds $E\# E$ and $\star E\#\star E$ are not stably homeomorphic. Hence the manifolds $M_S$ and $M_T$ are not stably homeomorphic.
\end{proof}

\begin{lemma}
	\label{lem:fixing-x}
	Let $N:=\RP^4\#\CP^2$ and let $\lambda_N$ be the equivariant intersection form. Let $x,y\in \pi_2(N)$ with $\lambda_N(x,x)=\lambda_N(y,y)=1$. Then $x=\pm y$.
\end{lemma}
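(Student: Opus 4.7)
The plan is to first identify $\pi_2(N)$ as a rank-one free $\Z[\Z/2]$-module on a natural geometric generator with equivariant self-intersection $1$, and then reduce the statement to a short calculation in $\Z[\Z/2]$ with its orientation-twisted involution.

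First I would analyze the universal cover. The orientation double cover of $\RP^4$ is $S^4$ with orientation-reversing antipodal involution $\tau$. Performing the equivariant connect sum of this cover with two copies of $\CP^2$ glued at a pair of antipodal balls, the orientation-reversing nature of $\tau$ forces one of the two copies to be attached with reversed orientation, so $\wt{N} \cong \CP^2 \# \ol{\CP^2}$ with deck involution $\tau$ swapping the two summands. Letting $e := [\CP^1] \in \pi_2(N)$ be the class of a line in the $\CP^2$-summand of $N$, its two lifts in $\wt{N}$ are exactly the hyperplane classes of the two summands and form a $\Z$-basis of $H_2(\wt{N}) = \pi_2(N)$. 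Hence $\pi_2(N) \cong \Z[\Z/2]\cdot e$ is free of rank one over $\Z[\Z/2]$.

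Next I would compute $\lambda_N(e,e)$. In the basis $\{\wt{e}, \tau\wt{e}\}$ the intersection form of $\wt{N} \cong \CP^2 \# \ol{\CP^2}$ is $\begin{pmatrix} 1 & 0 \\ 0 & -1 \end{pmatrix}$, and the two basis classes are disjoint in $\wt{N}$, so in the expression $\lambda_N(e, e) = \sum_{g \in \pi} w(g)\,(\wt{e} \cdot g\wt{e})\, g$ only the $g = 1$ term survives and contributes $+1$; hence $\lambda_N(e,e) = 1$.

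Finally, algebra finishes the argument. Since $N$ is non-orientable with $\pi_1 = \Z/2$, the orientation character $w$ is nontrivial, so the involution on $\Z[\Z/2]$ given by $\bar g = w(g)g^{-1}$ sends $t$ to $-t$. Writing $x = re$ with $r = a + bt$, sesquilinearity gives
\[
\lambda_N(x, x) = r\,\lambda_N(e, e)\,\bar r = r\bar r = (a + bt)(a - bt) = a^2 - b^2,
\]
so $\lambda_N(x,x) = 1$ forces $a^2 - b^2 = 1$, whose only integer solutions are $(a, b) = (\pm 1, 0)$. Hence $x = \pm e$, and the same argument gives $y = \pm e$, so $x = \pm y$. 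The main delicate point is pinning down the universal cover and the $\Z[\Z/2]$-module structure on $\pi_2(N)$; once that is in place, the rest is a one-line computation in $\Z[\Z/2]$.
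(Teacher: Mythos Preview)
Your proof is correct and follows essentially the same approach as the paper: identify $\pi_2(N)\cong\Z[\Z/2]$, use sesquilinearity with the twisted involution $\bar t=-t$ to compute $\lambda_N(a+bt,a+bt)=a^2-b^2$, and observe that $a^2-b^2=1$ has only the integer solutions $(a,b)=(\pm1,0)$. The paper simply asserts the isomorphism $\pi_2(N)\cong\Z[\Z/2]$ and the value $\lambda_N(1,1)=1$, whereas you supply the extra geometric justification via the universal cover $\wt N\cong\CP^2\#\ol{\CP^2}$; this is a nice addition but not a different argument.
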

\begin{proof}
	We have $\pi_2(N)\cong \Z[\Z/2]$. By sesquilinearity of the intersection form, $\lambda_N(a+bT,a'+b'T)=aa'-bb'+(ba'-ab')T$, where $T\in\Z/2$ is the generator and  noting that $w(T)=-1$, for the orientation character $w\colon \pi_1(N)\to C_2$. Hence $\lambda_N(a+bT,a+bT)=a^2-b^2$. The only possible solutions in $\Z$ for $1=a^2-b^2$ are $a=\pm 1$ and $b=0$. Hence $x=\pm y$ as claimed.
\end{proof}

We are now ready for the proof of \cref{thm:gluck-gives-exotic}. Recall that $R$ denotes the Cappell--Shaneson exotic $\RP^4$, i.e.\  $R$ is homeomorphic not diffeomorphic to $\RP^4$.

\begin{proof}[Proof of \cref{thm:gluck-gives-exotic}]
Recall that $S^2 \wt{\times} S^2 \cong \CP^2\#\ol{\CP^2}$. We define $M:= \RP^4\# (S^2 \wt{\times} S^2) \cong \RP^4\#\CP^2\#\ol{\CP^2}$.
By \cite{Akbulut-fake-4-manifold}*{Theorem~2}, $R\#\CP^2$ and $\RP^4\# \CP^2$ are diffeomorphic. By \cref{rem:cp2,lem:fixing-x}, there is a diffeomorphism sending $\CP^1$ to a smooth sphere $U\subseteq \RP^4\# \CP^2$ homotopic to $\CP^1\subseteq \CP^2$. Hence as in the proof of \cref{thm:gluck-E}, for the spheres $S:=\CP^1\#\ol{\CP}^1$ and $T:=U\#\ol{\CP}^1$ in $M$ we get $M_S\cong \RP^4\#(S^2\times S^2)$ and $M_T\cong R\# (S^2\times S^2)$. Since $\RP^4$ and $R$ are homeomorphic~\cite{HKT-nonorientable} but not stably diffeomorphic~\citelist{\cite{CS-on-4-d-surgery}*{Theorem~2.4}\cite{CS-new-four-mflds-annals}*{p.~61}}, the same applies to these manifolds.
\end{proof}

\def\MR#1{}
\bibliography{bib}
\end{document}